\newtheorem{theorem}{Theorem}
\newtheorem{proposition}{Proposition}
\newtheorem{definition}{Definition}
\newtheorem{remark}{Remark}
\def\F{\mathcal F}
\def\T{\mathcal T}
\def\E{\mathcal E}
\def\A{\mathsf{A}}
\def\B{\mathsf{B}}
\def\L{\mathsf{L}}
\begin{document}

\title{TIED LINKS}

\author{Francesca Aicardi}
 \address{The Abdus Salam International Centre for Theoretical Physics (ICTP),  Strada  Costiera, 11,   34151  Trieste, Italy.}
 \email{faicardi@ictp.it}
 \author{Jes\'us Juyumaya}
 \address{Instituto de Matem\'{a}ticas, Universidad de Valpara\'{i}so,
 Gran Breta\~{n}a 1111, Valpara\'{i}so, Chile.}
 \email{juyumaya@gmail.com}

\dedicatory{ To the  memory of  Slavik  Jablan}

\keywords{ Knots invariants, skein relation, algebra of braids and ties, tied braid monoid}
\thanks{
This work was  started  during a Visiting Fellowship of the second author at the ICTP.
The second author has been partially supported  by Fondecyt 1141254  and by
the European Union (European Social Fund  ESF) and Greek national funds through the Operational Program
  \lq Education and Lifelong Learning\rq\ of the National Strategic Reference Framework (NSRF)  Research Funding
Program: THALES: Reinforcement of the interdisciplinary and/or interinstitutional research and innovation.
}

\subjclass{Mathematics Subject Classification 2000: 57M25, 20C08, 20F36}

\date{}

\begin{abstract}
In  this paper we  introduce the {\it tied  links}, i.e. ordinary links provided with some
\lq ties\rq\
between  strands.    The  motivation   for introducing  such  objects originates from  a diagrammatical interpretation of the defining generators of  the so--called algebra  of  braids  and  ties; indeed,  one  half of  such  generators  can  be  interpreted as the  usual  generators   of the braid algebra,  and the  other half can be  interpreted  as  ties  between consecutive  strands; this interpretation leads to the definition of {\it tied braids}.    We  define
 an  invariant polynomial for  the  tied  links   via  a  skein  relation.  Furthermore,  we  introduce the monoid of tied braids and we prove the corresponding theorems of    Alexander and   Markov for  tied  links. Finally, we prove that the invariant of tied links    we  defined   can be   obtained also by using the Jones recipe.
\end{abstract}

\maketitle

\section*{Introduction}
Since  the  seminal  works  of   Jones   \cite{joAM}   on  the famous  invariants  for  classical  links,  now called    the Jones polynomial, several  other classes of knotted objects have  been proposed, singular links \cite{ba, bi, sm}, framed links \cite{ks}  and virtual knots \cite{kau},  among others.  This paper introduces and studies  a   new  class of knotted objects, the  { \sl tied  links}. Tied links are the closure of   {\it tied braids}; tied braids
come  from a diagrammatical interpretation of the  defining generators of the so--called algebra of braids and ties, in short bt--algebra,   which appeared the first time in \cite{ju} and \cite{AJ} and was studied in different contexts in  \cite{rh}, \cite{baART} and \cite{AJtrace}. The bt--algebra was firstly introduced    as a tool to study the
representation theory  of the Vassiliev algebra (see \cite[Section
2.1]{AJ}) and consequently  to construct new representations of the braid group.
Also, in \cite{AJ} it is shown  that this algebra can be Yang--baxterized in
the sense of Jones.

\smallbreak

Let $n$ be a positive integer, the bt--algebra ${\mathcal E}_n$ is  a 1--parameter unital associative  algebra,  defined   through a presentation with  generators $T_1,\ldots ,T_{n-1}$ and $E_1, \ldots , E_{n-1}$ and certain relations, for details see Definition \ref{btalgebra}.  The defining relations of ${\mathcal E}_n$ make evident that the generators $T_i$'s can be interpreted diagrammatically  as the usual braids generators. On  the contrary, an immediate diagrammatical interpretation of the generators $E_i$'s is not evident. However, in \cite{AJ} we proposed the interpretation of  the generator $E_i$ as a tie  that  connects the $i$--strand with the $(i+1)$--strand, thus making  the bt--algebra  a   diagrams algebra. In \cite{AJtrace} we have proved that the bt--algebra supports a Markov trace. Consequently, using this Markov trace, the classical theorems of Markov and Alexander for classical links,  and certain representation of the classical braid group   in the bt--algebra, w
 e have defined an invariant, at three parameters, for  classical links; an analogous route yields an invariant for singular links; see \cite{AJtrace} for details. Now, we    associate   to the bt--algebra  the {\it tied braid monoid},  which is defined essentially as  the monoid  having a  presentation analogous to the bt--algebra, but taking into account only the monomial  relations. We call tied braids  the elements of this monoid and   tied links   the closure of the tied braids.
Notice that the  classical links can be regarded as  tied links since  the braid group can be considered naturally as a submonoid of the tied braid monoid.

\smallbreak

This paper is largely  inspired by our work \cite{AJtrace} on the bt--algebra. More precisely, we  introduce and study the tied braid monoid and the tied links. In particular, we
define an invariant for tied links which \lq contains\rq\ the invariants for classical links defined in \cite{AJtrace}. This  invariant  is defined in two ways:  one  by a skein relation and the other   one by the Jones recipe, that is, via    normalization and  rescaling of the composition of a representation of the tied braid monoid in the bt--algebra with the Markov trace defined on it.

\smallbreak

This paper is  organized  as  follows.  In  Section  1,  we  define  the tied  links,  their  diagrams  and  their isotopy  classes.  In  Section  2,   we prove the first main result (Theorem \ref{the1}) of the paper, that is, the construction, via skein relation, of  an  invariant   polynomial for  tied  links, which we shall denote $\mathcal{F}$.  The  proof of  the existence  of such  invariant is an adaptation of the proof given in \cite{Lic}  for the Homflypt polynomial for  classical  knots.  Further, we  provide  the  value of  the invariant  on  simple  examples  of  tied  links.     Section 3, is devoted to the  understanding  of the tied links   through the introduction of a new object, that we  call the monoid of tied braids (Definition \ref{monoidtb}); also, in this section  we prove the analogous of the Alexander theorem  (Theorem \ref{alexandertb}) and of  the Markov theorem (Theorem \ref{markovtb}) for tied links.
  Section 4,  has as goal the construction of the invariant  $\mathcal{F}$ through the Jones recipe, i.e.,   by the composition of the natural representation of the monoid of tied braids in the bt--algebra  with  the  trace  on  the  bt--algebra defined in \cite[Theorem 3]{AJtrace}; this construction  is done in Theorem \ref{Ftrace}. Finally, Section 5
 is  devoted  to  the  computer  algorithm   to  obtain the  value of  $\mathcal{F}$  for any  tied  link put in  the  form  of  a  closed tied  braid;   the algorithm   calculates  the  trace of  the  tied  braid and  then  the polynomial  for  the tied  link  via  the Jones recipe.

\smallbreak
It  is  a  pleasure  to  dedicate  this  work to the  memory of Slavik Jablan,  who since 2010     showed interest  in  our algorithm   to calculate the invariant  (for classical links) ${\mathcal F}$, here presented, and  whose  existence  was at  that  time  only  conjectured.   We will  always be grateful   to Slavik,
in particular for  his  personal way of  being a mathematician  as  an authentic   \lq truth lover\rq. Slavik always enjoyed  sharing his  knowledge  and achievements with  extreme  generosity.
\smallbreak

\section{Tied  links}\label{S1}

 In this  section  we  introduce  the  concepts  of   tied  links  and  of  their diagrams. In  fact, a tied link is a  link  whose set of components is    subdivided  into  classes.
   Our   Definitions 1  and 3   are motivated by the concept of a {\em tie} as a notational device to indicate that two components of a link belong to the  same  class.
   The tie is an arc drawn from a point on one component to a point on the other or the same component. It will be indicated by a wavy line and it is part of the formalism that the way
   this arc is embedded in three--space is not relevant.

\begin{definition}\rm
{\it A tied  (oriented) link    $L(P)$  with  $n$  components} is  a  set $L$ of $n$ disjoint  smooth (oriented) closed   curves embedded  in $S^3$, and  a  set $P$ of
{\sl ties}, i.e., unordered pairs of points $(p_r,p_s )$ of such  curves  between which  there is   an arc  called a  tie. Ties are depicted as springs connecting pairs of points lying on the curves. The tie is a notational device, not an embedded arc. Arcs can cross through it.
 We shall denote  $\T$ the set of oriented  tied links.
 \end{definition}
\begin{remark}\label{LCT}\rm
If $P$ is the empty set, the tied link
  $L(P)$  is nothing else but the classical link $L$.
\end{remark}
\begin{definition}\rm
{\it A  tied link diagram}  is  like the  diagram of a  link, provided  with   ties,  depicted as  springs   connecting  pairs of points
lying on  the  curves.
\end{definition}

  Observe  that  the  set  $P$ defines  a  partition  of  the  set  of  the  components  of  $L$ into  classes in this  way:  if a  tie connects two  components,  then  these  components
belong  to  the  same  class.  Observe  also  that  the  sets of  components  of  two  isotopic  links  are  equal,  and  can  be  identified.

\begin{figure}[H]
\centering
\includegraphics{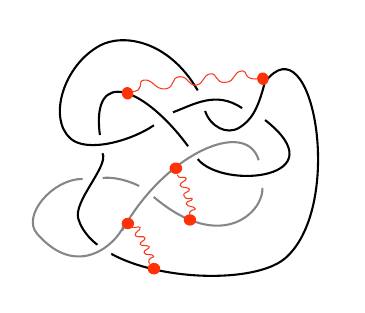} \caption{The  diagram of a tied link.}
\end{figure}

\begin{definition}\label{isotop} \rm Two oriented tied  links   $L(P)$  and  $L'(P')$  are  {\sl tie--isotopic}  if:

\begin{enumerate}
\item  the links $L$  and  $L^{\prime}$  are  ambient
isotopic

\item   The  sets  $P$  and  $P'$  define  the  same  partition  of  the  set of components  of $L$  and  $L'$.

         \end{enumerate}
\end{definition}

In  other  words  the tie--isotopy says that,  when remaining    in the  same equivalence class, it  is allowed  to   move  any tie between two  components letting  its extremes    move
 along the   two  whole components.   Moreover,  ties  can  be destroyed or  created  between  two components,  provided that these  components   either  coincide,
 or  belong  to  the  same  class.

\begin{definition}\rm Two  components  will  be  said  {\sl tied} together,   if  they  belong  to  the  same  class, i.e.,  if   between  them    a  tie already  exists or  a
tie can  be  created.
\end{definition}

  Examples  of  diagrams of  tie  isotopic   tied links  are  given  in   Figure  \ref{TL2}.  The  links  have  three components  D, G  and B  (respectively for  Dark,  Green  and  Blue).
  The   three  links are  evidently ambient  isotopic.   The  three  components are  all  tied  together, i.e.,  they belong  to  a  sole  class.   Indeed,  in  the  first  two  diagrams  D is  tied  with  G  and with  B.
  Therefore  G results to  be  tied with   B.  A  tie  between G  and  B  is in  fact present  in  the  third  diagram.
   Other  ties  can  be  destroyed,  i.e.,    the  tie  between D  and  itself  in  the  first  two diagrams,  the
     tie  between  G  and  itself  in  the   first diagram,  and   the  second  tie  between  D  and  G    the  second  diagram.

\begin{figure}[H]
\centering \includegraphics{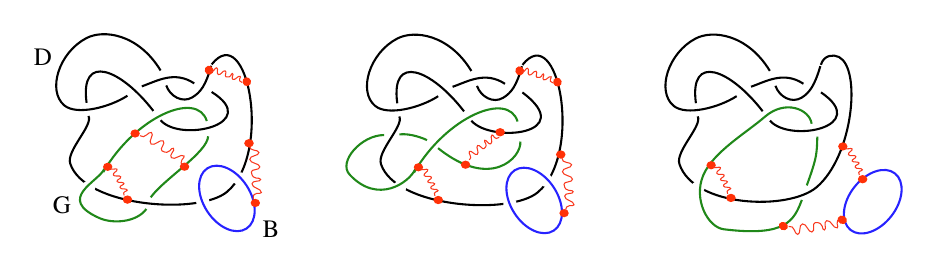}
\caption{Diagrams of tie--isotopic tied links.}\label{TL2}
\end{figure}

Since  the  Reidemeister moves  preserve  the link  components, it is  evident  that  the  diagrams  of  two tie--isotopic  links  can  be  transformed  one  into  the  other by
means  of the  usual  Reidemeister  moves:  the extremes of  the ties  can  be always shifted, so  that  the  ties  result  to  be outside  the ball into  which  each  move takes place.
  Remember that the ties can  freely move (provided only that the  endpoints  move continuously  on  the curves), overstepping  other  ties  as  well  as  the  link  curves.

\begin{definition} \rm  We  call {\it essential}  a  tie between  two  components,  if  it  cannot  be  destroyed (i.e. by  removing  it,  the  components become  untied). Therefore,  an  essential  tie  always exists    between  distinct components.
\end{definition}

\begin{remark} \rm A  tied link  with $n$ components is  therefore  nothing else but  a  link  where  the  components are  colored  with  $m\le n$  distinct  colors \cite{Ai};  two
components have the  same color  if  they  are  tied  together.   However, we deal with diagrams  that  are made  by links  with  ties,  and  that may look arbitrarily complicated.
\end{remark}

\section{An invariant for  tied  links}\label{Sec2}

In order to define our invariant for tied links, we need to fix   some notations. From now on we fix three   indeterminates $u$, $z$ and $w$, and we set $A={\Bbb C }(u,z,w)$.

Let us  denote  by  $\T$    the set of oriented tied  links diagrams. Notice that  an  invariant of  tied  links is  a  function  from $ \T $ in $A$  that  takes one constant  value  on each class  of tie--isotopic links.

\smallbreak

 \noindent {\bf Notation.}    In  the  sequel,   if  there  is  no risk  of  confusion,    we  indicate by $TL$  both the  oriented   tied  link  and   its  diagram.

\subsection{}

 The following  theorem  is  a  counterpart   of  the theorem stated in    \cite[page 112]{Lic}   for  classical  links.

 \begin{theorem} \label{the1} There  exists  a  function    $\mathcal{F}:\T \rightarrow   A$,     invariant  of oriented tied  links,    uniquely  defined  by  the   following  three  conditions  on  tied--links diagrams:
\begin{itemize}
\item[I] The  value of  $\mathcal{F}$ is  equal to 1  on  the  unknotted circle (no  matter if  tied  with itself)

\item[II] Let  $TL$   be  a tied  link.  By  $TL^o$  we  denote  the   tied   link  consisting of $TL$  and  the unknotted  circle (no  matter if  tied  with itself),  unlinked to $TL$.
Then
$$
\mathcal{F}(TL^o)=  \frac {1}{wz} \mathcal{F}(TL)
$$
\item[III] Skein  rule:  let $TL_+, TL_-, TL_\sim, TL_{+,\sim}$  be  the  diagrams  of  tied  links,  that  are  identical    outside   a  small  disc into  which enter two  strands, whereas  inside the disc the  two strands look  as
   shown  in Fig. 3.  Then  the  following  identity  holds:
$$
 \frac{1}{  w}\mathcal{F}( TL_ +)-w \mathcal{F} (TL_-) = \left(1-u^{-1}  \right) \mathcal{F} (TL_\sim) + \frac{1}{w} (1-u^{-1} ) \mathcal{F}(TL_{+,\sim}).
$$

\end{itemize}
\end{theorem}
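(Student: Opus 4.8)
The plan is to transplant, essentially unchanged in its architecture, the inductive proof of existence of the Homflypt polynomial on p.~112 of \cite{Lic}, while grafting onto it the bookkeeping needed for the ties and for the extra summand $\mathcal{F}(TL_{+,\sim})$ appearing in III. Concretely, I would first define $\mathcal{F}$ on \emph{diagrams} by a recursion driven by a well-founded complexity, and then show that the number produced is independent of all the auxiliary choices and invariant under the Reidemeister moves together with the tie moves of Definition \ref{isotop}, so that it descends to a genuine function on $\T$ satisfying I--III.

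For the recursion I would fix, on each diagram $D$, an ordering of its components and a base point on each (placed away from the crossings); this singles out the \emph{descending} diagrams and the notion of a \emph{bad} crossing, one that must be switched to make $D$ descending. I would then run a lexicographic induction on the triple $\bigl(c(D),\,t(D),\,b(D)\bigr)$, where $c$ is the number of crossings, $t$ is the number of crossings joining strands that lie in \emph{distinct} tie-classes, and $b$ is the number of bad crossings. The base case $b=0$ is a descending diagram, that is, a tied unlink; its value is forced by I and II together with the reduced relation below, and turns out to depend only on the coloring partition of the components.

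The heart of the termination argument is the interaction of III with the idempotency of ties recorded in the discussion after Definition \ref{isotop}: a tie between two strands already lying in the same tie-class may be created or destroyed freely. Hence, when the two strands at the chosen crossing are \emph{already} tied, $TL_{+,\sim}$ is tie-isotopic to $TL_+$, and substituting $\mathcal{F}(TL_{+,\sim})=\mathcal{F}(TL_+)$ into III collapses it to the three-term, Homflypt-type relation
\[
\frac{u^{-1}}{w}\,\mathcal{F}(TL_+) - w\,\mathcal{F}(TL_-) = \left(1-u^{-1}\right)\mathcal{F}(TL_\sim),
\]
which lets me switch such a crossing at the cost only of the lower-$c$ term $TL_\sim$. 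When instead the two strands lie in distinct classes I apply III as stated: the switched term $TL_\mp$ keeps $(c,t)$ fixed while strictly lowering $b$; the term $TL_\sim$ lowers $c$; and the troublesome term $TL_{+,\sim}$, though it leaves $c$ unchanged, merges the two classes and so strictly lowers $t$. Thus every term on the right has strictly smaller complexity and the recursion is well founded. This is the one genuinely new point with respect to \cite{Lic}: the extra summand does not reduce the crossing number, and it is only the monotonicity of $t$ under tie-creation that rescues the induction.

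It remains to prove that the value is independent of the auxiliary data and invariant under the allowed moves, and this is, as in \cite{Lic}, the main obstacle. Following Lickorish I would show, by the same double induction, that the recursively defined $\mathcal{F}$ in fact satisfies III and its tied reduction at \emph{every} crossing, not merely at the chosen one, and is unchanged under reordering the components, sliding the base points across crossings, and permuting the order in which crossings are resolved; the ties are inert throughout, since by the remark following Definition \ref{isotop} their endpoints can always be pushed off the ball in which a move takes place. Invariance under the Reidemeister moves then follows exactly as in the classical argument, the split-circle factor and the unknot value being pinned down precisely by I and II; invariance under the tie moves of Definition \ref{isotop} reduces to the idempotency identity already used above, namely that creating a redundant tie leaves $\mathcal{F}$ unchanged. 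Assembling these facts exhibits $\mathcal{F}$ as a well-defined invariant on $\T$ obeying I--III, which is Theorem \ref{the1}.
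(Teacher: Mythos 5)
Your architecture is the same as the paper's: both transplant the Lickorish--Millett induction, resolve bad (there: ``deciding'') crossings toward an unlink diagram, assign to tied unlinks the value $t^m/(wz)^{c-1}$ determined by the partition into tie-classes, and defer to the classical double induction for independence of choices and Reidemeister invariance. The genuine difference is the device that makes the recursion well founded. The paper never applies rule III at an untied crossing: it first derives rule IV (exactly your ``collapsed'' relation) and then substitutes IV into III to obtain rules Va/Vb, in which \emph{every} term with the same crossing number has the crossing switched --- the tie-carrying summand there is $TL_{-,\sim}$, not $TL_{+,\sim}$. Hence the inner induction is literally the classical one on the remaining deciding crossings, no new complexity measure is needed, and (crucially) the rule chosen at a crossing depends only on the picture inside its own disc. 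You instead keep III verbatim and absorb the unswitched summand $TL_{+,\sim}$ via the lexicographic measure $(c,t,b)$, where $t$ counts crossings between distinct tie-classes; that termination argument is correct, and it is a legitimate alternative to the paper's rearrangement.

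However, there is a gap where you assert that ``the ties are inert throughout'' so that the choice-independence checks run ``exactly as in the classical argument.'' In your scheme the ties are \emph{not} inert: whether you apply III or its collapse at a crossing depends on the global tie-class partition, and the terms $TL_\sim$ and $TL_{+,\sim}$ produced at another crossing merge classes, hence change which rule applies at the first crossing. Concretely, let $p,q$ be positive crossings between the same two untied components, and write $\sigma_p$, $\rho_p$, $\tau_p$ for switching, smoothing-with-tie, and tie-adding at $p$. Resolving $p$ first, the diagram $\rho_q\sigma_pD$ appears with coefficient $w^2\cdot w(1-u^{-1})$ (III is used at $q$ inside $\sigma_pD$); resolving $q$ first, the same diagram appears with coefficient $w(1-u^{-1})\cdot uw^2$ (the collapsed rule must be used at $p$ inside $\rho_qD$, since smoothing $q$ merged the components). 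These differ by a factor $u$, and moreover the two expansions contain non-matching terms ($\rho_q\tau_pD$ versus $\rho_p\tau_qD$), so the term-by-term commutation argument --- which is exactly what the paper writes out in its verification (i), and what Lickorish's proof rests on --- fails for your recursion. The two expansions are in fact equal, but proving it requires strengthening the induction hypothesis to include tie-isotopy invariance of lower-complexity values (e.g.\ that a tie joining two components may slide from $p$ to $q$, so $\mathcal{F}(\tau_pD)=\mathcal{F}(\tau_qD)$) and re-expanding the auxiliary diagrams $\tau_pD$, $\rho_pD$, $\rho_qD$ at both crossings to produce the compensating identities. So your plan is repairable, but the hardest step is genuinely harder than you claim, and the paper's rules Va/Vb exist precisely to avoid this: they make the rule choice local to the disc, so that resolving one crossing can never alter how another is resolved.
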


\begin{figure}[H]
\centering
\includegraphics{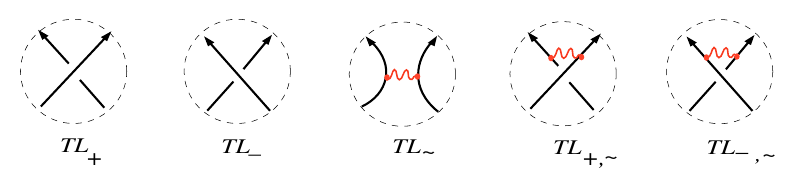}
\caption{ The  discs,  where   $TL_+, TL_-$, $TL_{\sim}$  and  $TL_{+,\sim}$   are not tie-isotopic.}\label{ties3}
\end{figure}

\begin{remark}\label{othersSkein}
\rm The following   three skein  rules,  all  equivalent  to  the  skein  rule  above,   will   be  used in  the  sequel. The  first  one is  obtained   from   III, simply  adding a  tie between  the  two  strands  inside  the  disc. The  rules  Va,b  follow   from  III  and  IV.

 \begin{enumerate}
 \item[IV]
 \[ \frac{1}{u w }\mathcal{F}( TL_ {+,\sim})- w \mathcal{F} (TL_{-,\sim}) = (1-u^{-1}) \mathcal{F} (TL_\sim) \]

\item[Va]
 \[ \frac{1}{ w }\mathcal{F}( TL_ {+})= w \left[\mathcal{F}(TL_-) +\left( {u}-1 \right) \mathcal{F}(TL_{-,\sim})\right] + \left(u-1\right) \mathcal{F} (TL_\sim) \]

\item[Vb]
\[   w  \mathcal{F}( TL_ {-})= \frac{1}{ w } [\mathcal{F}(TL_+) +(u^{-1}-1) \mathcal{F}(TL_{+,\sim})] + ({u^{-1}}-1) \mathcal{F} (TL_\sim).\]
\end{enumerate}
\end{remark}

\begin{proof}
Theorem \ref{the1} is  proved by  the  same  procedure used in  \cite{Lic}.  We  will outline  the  parts  where  the  presence  of  ties modifies  the  demonstration.

  According  to \cite{Lic},   the  fact  that  the skein  rules,  together  with  the  value  of  the  invariant  on  the  unknotted  circle,  are    sufficient  to  define  the  value  of  the  invariant  on  any tied   link  is  proved  as  follows.

 Let  $\T^n$  be the  set  of  diagrams  of  tied  links  with $n$  crossings,  and  $TL\in \T^n$. Ordering  the  components  and  fixing a  point     in  each  component,  for  every  diagram   $TL$   an  associated  standard ascending diagram   $TL'$ is constructed.  This ascending diagram is obtained by  starting  at the  base point  of the  first   component  of $TL$,  and proceeding  along  that  component, changing  the overpasses  to underpasses (where  necessary) so  that  every  crossing  is   first  encountered  as  an underpass. Continue  from  the  base point of  the  second  and of all subsequent  components in  the  same  way. This  process  separates  and  unknots the  components.    The  diagrams   $TL$ and   $TL'$  are thus  identical  except    for a  finite  number  of  crossings,  here  called  \lq deciding',  where  the signs  are  opposite. Furthermore,  we  define  here  $\widetilde{TL}'$,  adding  to  $TL'$  a  tie between  the  strands near to {\it each}  dec
 iding crossing.   $TL'$   and  $\widetilde{TL}'$ are  by  construction    collections  of  unknotted  and  unlinked  components;  $TL'$ has   the  same  ties  as  $TL$,  $\widetilde{TL}'$  may  have  more  ties.  The  procedure  defining  $TL'$ allows us to  get  an  ordered   sequence  of deciding crossings, whose  order  depends  on  the ordering  of  the  components,    and  on the  choice of the base points.

 The  induction  hypothesis   states  that  we  have         a function  $\mathcal{F}: \T^n \rightarrow A$,  which satisfies   relations I--III. This  function is  independent of the  ordering    of  the  components,   independent of  the  choices  of  the  base  points, and invariant  under  Reidemeister  moves  which  do  not  increase  the  number  of  crossings  beyond  $n$.  Moreover,  also  by  induction hypothesis, the  value  of  $\mathcal{F}$  on  any  diagram  with  $n$  crossings of  the  tied link
 $TL_0^{c,m}$, consisting   of  $c$  components  unknotted  and  untied,    connected by  $m$  essential     ties ($m\le c-1$),    is   equal to
$t^{m}/ (wz)^{c-1}$,   where
\begin{equation}\label{tofw}
t:=\frac{z(u w^2-1)}{(1-u)}
\end{equation}
(observe  that  these  values of $\mathcal{F}$ are  independent  of $n$).

  One  starts  with  zero   crossings: the  tied  link  is  thus  a  collection  of  $c\ge 1$ curves  unknotted  and  unlinked,  with  $m \le c-1$  essential  ties  between  them.  The  value of  $\mathcal{F}$ on  such tied  link  is given   by   $t^{m}/(wz)^{c-1}$.

 Now, let  $TL$  be  in  $\T^{n+1}$.   If  $TL$ consists   of  $c$  components  unknotted  and  untied,    connected by  $m$  essential     ties ($m\le c-1$),  then we define
 \begin{equation}\label{eq2} \mathcal{F}(TL)=   \frac{t^{m}}{(wz)^{c-1}}.
  \end{equation}   Otherwise,
 consider  the  first    deciding  crossing $P$.   If  in  a  neighborhood of  $P$  the  tied  link  looks  like $TL_{+,\sim}$ (or  $TL_{-,\sim}$), then we
  use  skein  rule  IV    to  write   the  value  of  $\mathcal{F}$  in  terms  of     $TL_{-,\sim}$ and  $TL_{\sim}$ (respectively, $TL_{+,\sim}$ and $TL_{\sim}$). If  in  a  neighborhood of  $P$  the  tied  link  looks  like $TL_+$  (respectively, $TL_-$), then we  use  skein  rule V-a  (resp.,  V-b)  to  write    the  value  of  $\mathcal{F}$ in  terms of  the    value of $\mathcal{F}$  on  the  tied  links    $  TL_{-},  TL_{-,\sim}$ and  $TL_{\sim}$ (respectively, $  TL_{+},  TL_{+,\sim}$ and  $TL_{\sim}$).  Observe  that  if  the  tied  links $TL_{\sigma}$  or  $TL_{\sigma, \sim}$ ($\sigma=\pm$)  coincide  with  the  original  tied  link  in a  neighborhood  of  the  crossing  $P$,    then $TL_{-\sigma}$ and  $TL_{-\sigma,\sim}$ in the  neighborhood of  the same  crossing coincide with  the associated   tied  link  $TL'$ or  $\widetilde{TL}'$.  On  the  other  hand, $TL_{\sim}$ represents  a  tied link diagram  with  $n$  crossings,  for  which  the  value  of  $\mathcal{F}$  i
 s  known,  and  invariant  according  to  the  induction  hypothesis. Then  we  apply
  the  same  procedure  to the  second  deciding crossing,  which  is  present  in  all the  diagrams,  obtained  by  the  application  of  the  skein  rules,  and that  results to have  $n+1$ crossings,  and  so  on. The  procedure  ends  with  the  last  deciding  crossing, thus obtaining      unlinked and  unknotted tied  links, with  $n+1$ crossings,  where  the value of  $\mathcal{F}$ is  given   by (\ref{eq2}), depending  only  on  the  number  of  components  and  the  number  of  essential  ties.

Observe  that   the  skein  rule  IV  could   be  avoided:  this  should extend the  procedure.

  It  remains  to  prove  that:

  \begin{enumerate}
  \item[({\it i})] the  procedure  is  independent  from  the  order of  the  deciding points
  \item[({\it ii})]  the  procedure  is independent  from  the order of  components,   and from the choice  of  base--points
  \item[({\it iii})] the  function   is  invariant  under  Reidemeister  moves.
\end{enumerate}

Following  the  proof done  in \cite{Lic} for  classical  links,  we  observe  that the  proofs  of points ($i$),  ($ii$),  and  ($iii$) can also be  done  in an  analogous way   in     presence  of ties.  Of  course,   every  time  a  skein  rule  is  used,  we  have  to  pay  attention   to    all tied links  involved   (the skein  relation for  $\F$ involves four  tied links  diagrams,  whereas the  skein  relation  for  the classical  Homflypt polynomial involves only three).  We  write  here  the  proof  of  statement ($i$)  as  an  example.  The  proofs  of the other statements  are similar.

The  proof  of statement ({\it i})  consists    in
  a  verification  that  the  value  of  the  invariant  does  not  change  if  we   interchange  any  two  deciding  points  in  the  procedure of  calculation.  So,  let  $TL$  be the  diagram  of  a  tied  link      and let  $p $  and  $q$  the  first two  deciding crossings  that  will  be  interchanged.

Denote by  $\epsilon_p $  the   sign  at  the   crossing  $p$, by $ \sigma_p TL$ the  tied  link  obtained  from  $TL$  by changing  the  sign  at the  crossing   $p$,  by $\tilde  \sigma_p TL$ the  tied  link  obtained  by $TL$  changing  the  sign  at    $p$ and  adding  a  tie  near $p$,  and  by  $\rho_p TL $ the  tied  link  obtained  by $TL$  removing  the  crossing     $p$ and  adding  a  tie.   Then,  do  the  same   for   point  $q$.

If     $q$  follows  $p$, then,  by  skein  relations Va,b,
$$
 \mathcal{F}(TL)=  w^{2\epsilon_p}[ \mathcal{F} ( \sigma_p TL) +  (u^{\epsilon_p}-1) \mathcal{F}(\tilde  \sigma_p TL)] + w^{\epsilon_p}(u^{\epsilon_p}-1 ) \mathcal{F}( \rho_p TL)
 $$
and
\begin{eqnarray*}
\mathcal{F}(TL)
& =  &  w^{2\epsilon_p} \{ w^{2\epsilon_q} [\mathcal{F}( \sigma_q \sigma_p TL )    +  (u^{\epsilon_q}-1) \mathcal{F}(\tilde \sigma_q \sigma_p TL)]+w^{\epsilon_q}(u^{\epsilon_q}-1) \F(\rho_q\sigma_p TL)  \}\\
&  &
  + (u^{\epsilon_p}-1)w^{2\epsilon_p}  \{ w^{2\epsilon_q} [\mathcal{F}( \sigma_q \tilde \sigma_p TL )    +  (u^{ \epsilon_q}-1) \mathcal{F}(\tilde \sigma_q \tilde \sigma_p TL)]+w^{\epsilon_q} ( u^{ \epsilon_q}-1 ) \F(\rho_q\tilde\sigma_p TL)  \}\\
&  &
 + (u^{ \epsilon_p}-1) w^{\epsilon_p} \{ w^{2\epsilon_q}[ \mathcal{F} ( \sigma_q \rho_p TL) +  (u^{ \epsilon_q}-1) \mathcal{F}(\widetilde  \sigma_q \rho_p TL)] + w^{\epsilon_q}( u^{ \epsilon_q}-1) \F(\rho_q  \rho_p TL)\}.
\end{eqnarray*}

If  $p$  follows  $q$,  then $\F(TL)$ is  obtained  from  the  above  expression  by interchanging  $p$ with  $q$. Observe that this expression contains  terms  of type $\F(\tau_q \tau_p TL)$  or $\alpha(\F(\tau_p \tau'_q TL) +\F(\tau'_p \tau_q TL))$,  where $(\tau,\tau')\in \{ \sigma, \tilde \sigma, \rho \}$ and  $\alpha$ is  a coefficient.  Such  terms  are   invariant  under  the  interchange of $p$ with $q$,  because   the  operation  $\tau_p$  commutes with  $\tau_q$  as  well  as with  $\tau'_q$. Therefore  $\F(TL)$ is  independent  of  the  order  of  $(p,q)$.

\end{proof}

\begin{remark}\label{circles} \rm
The necessity  of  II  in  the definition  of  $\mathcal{F}$ (Theorem \ref{the1})  is  due  to  the  fact  that  by  the sole skein  relation   we  cannot  calculate  the  value  of $\mathcal{F}$  on  $c$  unknotted  and  unlinked  circles,  without  ties  between  them.

The  value $X_c$ of  $\mathcal{F}$ on  $c$  unknotted  and  unlinked  circles,  all tied  together,  can be  calculated  using  skein  rule  IV recursively  $(c-1)$ times,  i.e.
\[ \frac{1}{u w } X_c- w X_c  = (1-u^{-1})  X_{c+1}, \quad \quad \text{from  which} \quad  \quad    X_{c+1}=\frac{t}{wz} X_c.\]
The  initial   value  $X_1=1$  of $\mathcal{F}$  is  given  by   rule I,  see  Figure \ref{ties4}.
\end{remark}

\begin{figure}[H]
\centering
\includegraphics{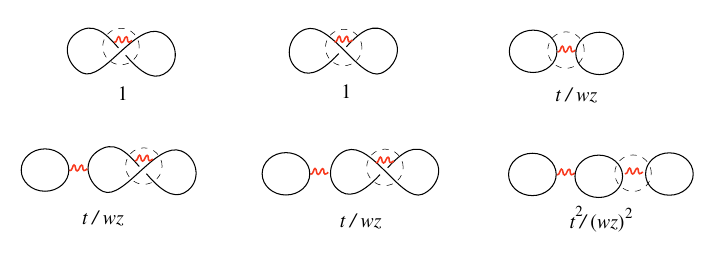}
\caption{The values  of  $\mathcal{F}$ on  the  unknotted and  unlinked tied circles   involved  in skein  rule IV.}\label{ties4}
\end{figure}
\begin{remark}\rm
Because of Remark \ref{LCT}, we observe  that  the  polynomial $\mathcal{F}$ provides  in  particular  an  invariant  polynomial  for  classical  links.
\end{remark}

\subsection{Properties  of  the  Polynomial  $\mathcal{F}$} \label{Properties}

Here  we  list  some  properties  of  the  polynomial  $\mathcal{F}$,  which  can  be  easily  verified.

\begin{enumerate}
\item[(i)]  $\mathcal{F}$  is  multiplicative  with respect to the  connected  sum of  tied  links
\item[(ii)] The  value  of  $\mathcal{F}$  does  not  change  if  the  orientations  of  all  curves  of  the  link  are  reversed
\item[(iii)] Let  $TL$ be a  link  diagram whose components  are all tied together, and $TL^\pm$ be the  link  diagram  obtained  from  $TL$  by changing  the  signs of  all  crossings. Thus  $\mathcal{F}(TL^\pm)$  is  obtained  from $\mathcal{F}(TL)$ by  the  following  changes:   $ w \rightarrow 1/w$  and    $u \rightarrow  1/u $
\item[(iv)] Let  $\Gamma$  be  a  knot or   a  link whose components  are all tied together, then    $\mathcal{F}(\Gamma)$  is  defined  by I and IV,  and     therefore satisfies the following  Homflypt--type  skein relation, cf. \cite{Lic}:
\begin{equation}\label{typehomflypt}
 \ell \mathcal{F}(TL_{+,\sim})  +  \ell^{-1} \mathcal{F}(TL_{-,\sim}) +  m  \mathcal{F}(TL_{\sim})= 0
\end{equation}
 where
 $$
 \ell = \frac{i} {\sqrt{u}w}   \quad \text{and} \quad m=i\left( \frac {1}{\sqrt u}-  \sqrt{u} \right).
 $$


\end{enumerate}

Item (i) is  deduced  from  the  defining  relation  I of  $\mathcal{F}$,  and  by  the  same  arguments that prove the  multiplicativity  of  the  invariants  obtained  by  skein  relations  (see  \cite{Lic}).  Item  (ii)  is  evident,  since the  value of  $\mathcal{F}$  on  the  unlinked  circles is  independent  of their  orientations,  and    the  skein  relations  are  invariant under  the  inversion  of  the  strands orientations.   Item  (iii) follows from  the  fact that, if  $ w \rightarrow 1/w$  and    $u \rightarrow  1/u $, the  skein  relations   Va and  Vb are  interchanged,  whereas the term $t/wz$ remains  unchanged.

  As  for item  (iv),  observe  that  if  the  components  of  the  links  are  all  tied together, or there is  a  unique  component,  then adding  a   tie  anywhere does not  alter the  isotopy class  of  the link; therefore,  in the  neighborhood  of  every crossing,  $TL_+$  (or $TL_-$)  can  be  replaced  by  $TL_{+,\sim}$  (or $TL_{-,\sim}$), and it can  be  treated by  the  sole   skein  relation  IV. Thus, it  is    evident  that  there  is  a  bijection  between the set of  isotopy  classes of   classical  links  and  the  isotopy  classes  of   tied  links  having  all components  tied  together. In  terms  of  diagrams,  it  is enough to add (to remove) a  tie      near  every  crossing  of  the diagram of the  classical  link,  as well  as  a tie between disjoint parts of  the diagram (see  Figure \ref{TL5}).

\begin{figure}[H]
\centering \includegraphics{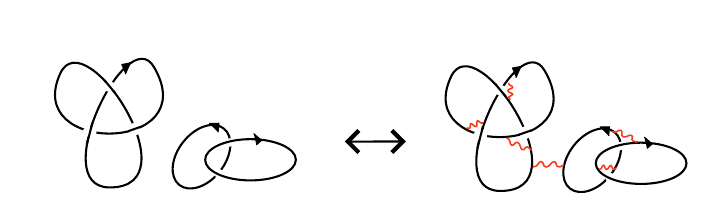}
\caption{Bijection between  classical  links  and  tied  links  with all  components tied  together.}  \label{TL5}
\end{figure}

Now,  multiplying  relation IV (Remark \ref{othersSkein}) by  $ u^{1/2}$ we obtain the skein relation (\ref{typehomflypt}), which is the defining  skein relation of  the  Homflypt polynomial.


\begin{remark}\rm
  By  writing   $w$ in  terms  of the  variables $u,z,t$ according  to  equation (\ref{tofw}): $$w=\sqrt{(z+t-ut)/uz}  $$   we observe  that  the  polynomial  $\mathcal{F}$  can  be  always  expressed  as  a Laurent polynomial  in  the  variables $u,z,t$,  multiplied for  $w^\epsilon$,  where   $\epsilon=1,0,-1$.   In  order  to  recover  the  polynomial  $\mathcal{F}$   by  means of the  Jones  recipe, it  turns  out  convenient  to  express  $\mathcal{F}$  in  this way.
\end{remark}

\subsection{Examples}

Let $H^+$,  $H^-$, $\widetilde{ H^+}$,  $\widetilde{ H^-}$,  $T^+$,  $T^-$ and  $E$  be the  tied  links  shown  in   Figure \ref{exam}.

\begin{figure}
\centering \includegraphics{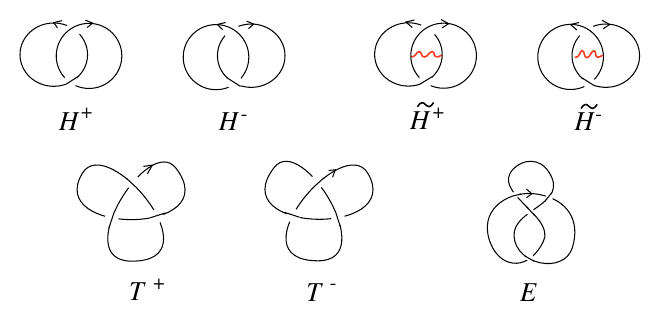}
\caption{}  \label{exam}
\end{figure}

We  show  the  calculation of  $\mathcal{F}$ for  the  first  link $H^+$;  for  the  others  we  write   the  result  of  the  calculation only.

Using  skein  rule Va, applied  to  the  upper crossing  of $H^+$,  we  write
$$  \mathcal{F}(H^+)= w^2 (\mathcal{F}(H_1)+(u-1)\mathcal{F}(H_2))+ w \mathcal{F}(H_3) $$
where  $H_1,H_2, H_3$  are  shown  in  the    figure below.

 \centerline{ \includegraphics{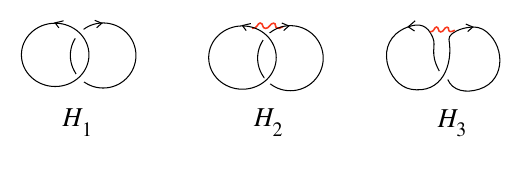}}

We  know  that,  by  rules  I  and  II,  and  by  Remark \ref{circles}:

$$  \mathcal{F}(H_1)= \frac{1}{wz},  \quad \quad \mathcal{F}(H_2)= \frac {t}{wz}, \quad  \quad  \mathcal{F}(H_3)=1.
$$
Therefore,
$$
\mathcal{F}(H^+)  =    \frac{w}{z}(1+ut +u z-t-z).
$$

For  the  other  links  of  Figure \ref{exam},  we  get:
$$
 \mathcal{F}(H^-)  =  \frac{u^2 +z+t-u z-ut}{ u w  (z+t-ut)},
$$\,
$$
\mathcal{F}(\widetilde {H^+})  = \frac{w}{z}( ut +u z- z),  \qquad \mathcal{F}(\widetilde{ H^-})  =   \frac{u^2 t+z+t-u z-ut}{ u w  (z+t-ut)},
$$\,
$$   \mathcal{F}(T^+)  =  \frac{ -u^3 tz- u^3t^2+2u^2  t^2  +3  u^2 t z +u^2 z^2-3  utz-u z^2-u t^2 +t z+z^2}{uz^2},
$$\,
$$
\mathcal{F}(T^-) =   \frac{ z(-u^3 t + u^2t -ut   +   u^2   z -u z  +z +t)}{u(z+t-ut)^2},
$$\,
$$
\mathcal{F}(E) =  \frac{    u^3 t^2 + u^3 t z -2 u^2t^2 -4 u^2 t z -u^2 z^2 +u t^2 +3 u z^2  +4 utz -z^2-t z }{uz(z+t-ut) }.
$$

 \begin{remark} \rm Observe  that  the  last  three  links  are  in  fact  knots (links  with one  component).  For  these  links, any  tie should     connect     the  component  with  itself,  i.e.  the  tie  is  not essential.
 \end{remark}

\section{The tied braid monoid }\label{Monoid}

The study  of classical links through the braid group is based on the classical theorems of  Alexander and Markov. The Alexander theorem states that any link can be obtained by closing a braid. The Markov theorem states when two braids yield  isotopic links.
 These theorems are repeated  for  singular knots \cite{bi, ge}, framed knots \cite{ks}, virtual knots \cite{kau} and $p$--adic framed links \cite{julaAM}. That is, in each of these classes of knotted objects,   a convenient  analogous of the braid group is  defined  and  analogous Alexander and Markov theorems  are  established. In this section we introduce the monoid of tied braids which plays the role of the braid group for tied links. Thus,  we will establish  the Alexander theorem and Markov theorem for tied links.

\subsection{}
We  introduce  now   the monoid of tied braids and we discuss   the diagrammatic interpretation for its defining generators.
\begin{definition}\label{monoidtb}\rm
 The tied braid monoid $T\!B_n$ is  the monoid generated by usual braids
 $\sigma_1, \ldots , \sigma_{n-1}$  and the generators  $\eta_1, \ldots ,\eta_{n-1}$, called ties,  such the $\sigma_i$'s satisfy braid relations among them  together with the following relations:
\begin{eqnarray}
\eta_i\eta_j & =  &  \eta_j \eta_i \qquad \text{ for all $i,j$}
\label{eta1}\\
\eta_i\sigma_i  & = &   \sigma_i \eta_i \qquad \text{ for all $i$}
\label{eta2}\\
\eta_i\sigma_j  & = &   \sigma_j \eta_i \qquad \text{for $\vert i  -  j\vert >1$ }
\label{eta3}\\
\eta_i \sigma_j \sigma_i & = &   \sigma_j \sigma_i \eta_j \qquad
\text{ for $\vert i - j\vert =1$}
\label{eta4}\\
\eta_i\sigma_j\sigma_i^{-1} & = &  \sigma_j\sigma_i^{-1}\eta_j \qquad \text{ for $\vert i  -  j\vert =1$}
\label{eta5}\\
\eta_i\eta_j\sigma_i & = & \eta_j\sigma_i\eta_j  \quad = \quad\sigma_i\eta_i\eta_j \qquad \text{ for $\vert i  -  j\vert =1$}
\label{eta6}\\
 \eta_i\eta_i & = & \eta_i \qquad \text{ for all $i$}.
 \label{eta7}
\end{eqnarray}
\end{definition}
In terms of diagrams the defining generator $\eta_i$
 corresponds to a  tie connecting   the $i$ with $(i+1)$--strands and $\sigma_i$ is represented as the usual braid diagram:

 \begin{figure}[H]
\centering \includegraphics{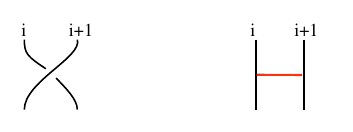}
\caption{Diagrammatic  representation  of  $\sigma_i$  and  $\eta_i$.}
\end{figure}

Now we  examine    the defining relations of $T\!B_n$ in  terms of  these  diagrammatic
interpretations of  the   defining generators.
Relations  (\ref{eta1})  and  (\ref{eta3}) are  trivial. Relation (\ref{eta2})  corresponds to the sliding of the tie through  the crossing.
Consider now  relations  (\ref{eta4})  and  (\ref{eta5}) in  terms  of  diagrams. The  first  one    corresponds to   the sliding of the tie from top to bottom
 behind or in  front of a strand. The  second  one  corresponds  to the  same  sliding  but bypassing the  strand.

\begin{figure}[H]
\centering \includegraphics{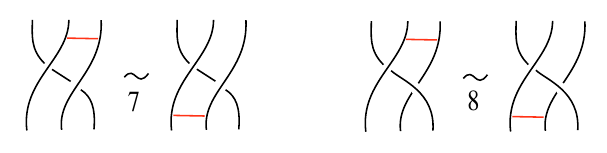}
\caption{Examples of relations (\ref{eta4})  and  (\ref{eta5}).}
\end{figure}

Finally,  we    see     relations  (\ref{eta6}) and (\ref{eta7})  in  diagrams.  Without  loss of  generality,  let us  assume  $i=2$, $j=1$ in (\ref{eta6}),  so  that

 \begin{equation}\label{c1} \eta_2\eta_1\sigma_2  =  \eta_1\sigma_2\eta_1   =  \sigma_2\eta_2\eta_1.      \end{equation}

\begin{figure}[H]
\centering \includegraphics{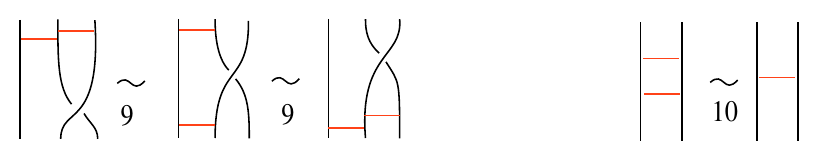}
\caption{Relations (9) and  (10). }  \label{TL3-6}\end{figure}

\subsection{Generalized ties}
Now we are going  to study certain  elements $\eta_{i,j}$ for all $\vert i-j \vert > 1$. These elements can be defined algebraically as follows,
$$
\eta_{i,j} =\sigma_i\cdots \sigma_{j-2}\eta_{j-1}\sigma_{j-2}^{-1}\cdots \sigma_{i}^{-1}
$$
see after proof of \cite[Lemma 1]{rh}.  In fact,  we want to analyze them in terms of diagrams; we will see that, diagrammatically, $\eta_{i,j}$ corresponds to a tie joining the  strand  $i$ with the  strand  $j$.  Furthermore, this generalized long tie    has the property that it is {\it transparent}  with respect to  all strands between the strands $i$ and $j$; i.e  it can be  drawn no  matter  if  in  front  or  behind  these  strands.  We  start by giving   a closer look to the following particular elements $\eta_{i,j}$:
\begin{equation}\label{eq1}
\eta_{i, i+2}:= \sigma_{i+1}^{-1}\eta_i\sigma_{i+1}.
\end{equation}
We   firstly multiply   both terms of relation (\ref{eta4}) at left by $ \sigma_j^{-1}$ and at right by $ \sigma_i^{-1}$, obtaining
\begin{equation}\label{eta12}
\sigma_j^{-1}\eta_i\sigma_j = \sigma_i\eta_j\sigma_i^{-1} \qquad \vert i-j\vert =1
\end{equation}
and  secondly we multiply  both terms of relation (\ref{eta4}) at left by $ \sigma_j^{-1}$ and at right by $ \sigma_j^{-1}$,   obtaining
\begin{equation}\label{eta21}
\sigma_j^{-1}\eta_i\sigma_j = \sigma_i^{-1}\eta_j\sigma_i \qquad \vert i-j\vert =1.
\end{equation}
Combining the last two equations we get
\begin{equation}\label{eq4}
\eta_{i, j+1}:=\sigma_j^{-1}\eta_i\sigma_j = \sigma_i\eta_j\sigma_i^{-1} =\sigma_j^{-1}\eta_i\sigma_j = \sigma_i^{-1}\eta_j\sigma_i \qquad \vert i-j\vert =1.
\end{equation}
In particular, for $n=3$, equations (\ref{eq4}) are expressed,  in terms of diagrams,  by:

\begin{figure}[H]
\centering \includegraphics{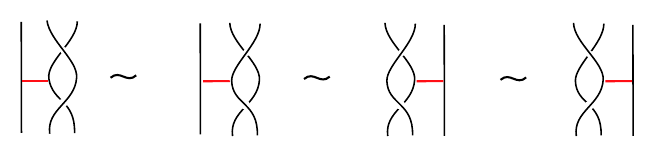}
\caption{ $\eta_{1,3}$  and  its  equivalent diagrams.} \label{TL3-3}
\end{figure}

Observe now that, if the ties are provided with elasticity, each one of the elements representing $\eta_{1,3}$ in Figure 10 can be transformed, by a Reidemeister move of second type in which the tie is stretched, in the following compact diagram (i.e., a tie connecting strand 1 with strand 3). From now on, the tie, having elastic property, will be represented as {\it a spring}.

\begin{figure}[H]
\centering \includegraphics{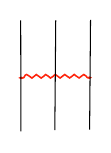}
\caption{A compact  diagram for $\eta_{1,3}$.}  \label{TL3-4}
\end{figure}

Note that  it  does  not  matter whether the  tie in the  compact  diagram of Figure \ref{TL3-4},   is    in  front  or  behind  the  strand 2.

Another  advantage  of  considering the  compact  diagram  (Figure \ref{TL3-4}) is  that  the  equation:
\begin{equation}\label{comm}       \eta_i  \sigma_j =  \sigma_j \sigma_j^{-1} \eta_i \sigma_j   =  \sigma_j  \eta_{i,j+1}\end{equation}
 can  be  interpreted  as  the  sliding  of  the  tie  up  and  down along  the  braid  under  stretching or  contracting. In  other  words,  while  the  elements  $\eta_i$  do  not  commute  with    $\sigma_j$ for  $|j-i|=1$,  the  equation  (\ref{comm})    can be interpreted  as a sort  of  commutation  between   $\sigma_i$  and  ties. The situation for $n=3$ is:

\begin{figure}[H]
\centering \includegraphics{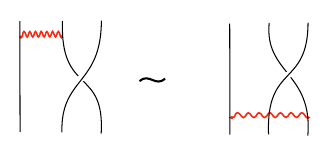}
\caption{$\eta_1 \sigma_2= \sigma_2 \eta_{1,3} $.}  \label{TL3-5}\end{figure}

  Let us continue regarding the case $n=3$ . By using  equations (\ref{comm})  and (\ref{eta1}),  we  obtain  from   (\ref{c1}) the  following  equalities
 \begin{equation}\label{c2} \sigma_2 \eta_{1,3}\eta_1= \eta_1\sigma_2\eta_1 = \eta_1 \eta_{1,3}  \sigma_2. \end{equation}

\begin{figure}[H]
\centering \includegraphics{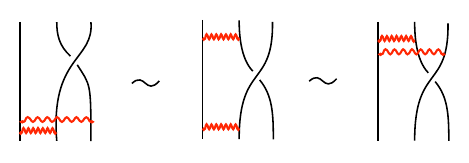}
\caption{   Relations(17).  }  \label{TL3-7}\end{figure}

Comparing now  (\ref{c1})  with  (\ref{c2})  we  get

\begin{equation}\label{EQ1}   \eta_{1,3} \eta_1= \eta_1 \eta_{1,3}= \eta_1 \eta_2.  \end{equation}
On  the  other  hand,  by using  equation  (\ref{eta6}) we  deduce
\begin{equation}\label{EQ2}   \eta_{1,3}  \eta_2=  \eta_2\eta_{1,3}=\eta_2 \eta_1 .  \end{equation}
  Having present (\ref{eta1}),   we  have  in terms of diagram the following equivalent  diagrams:

\begin{figure}[H]
\centering \includegraphics{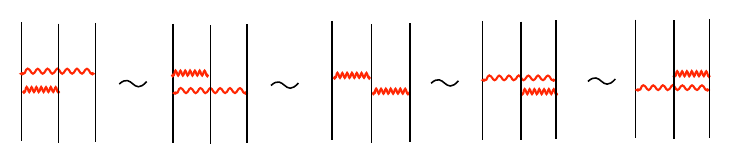}
\caption{  Left: relations    (\ref{EQ1});  right:  relations (\ref{EQ2}).  }  \label{TL3-8}\end{figure}

  Hence, in general for  $j-i=1$, we have that  $\eta_{i,j+1}$  commute  with  $\eta_i$, $\eta_j$  and
\begin{equation}\label{commeta}   \eta_{i,j+1} \eta_i = \eta_i\eta_{i,j+1}\eta_j =\eta_i\eta_j.  \end{equation}

We  are  now  ready  to  generalize  the  elements  $\eta_{i,j+1}$  ($j-i=1$),    to  the  elements  $\eta_{i,k}$,  for  every  $k\ge i$,  i.e.,
$\eta_{i,k}$  will  be  represented  by a  spring connecting  the  strand $i$  with  the  strand  $k$.
We  shall  say  also  that  such   tie  has {\sl length}  equal  to  $k-i$.

\begin{figure}[H]
\centering \includegraphics{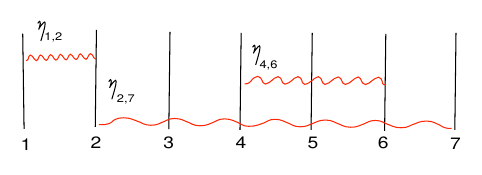}
\caption{Examples  of $\eta_{i,k}$.}  \label{TL3-9}\end{figure}

Of  course,  each  $\eta_i$ is a  tie of  length  1.
$$
\eta_{i,i+1}=  \eta_i.
$$
We  define  also the  tie  of  zero  length,  as the  monoid unit:

\begin{equation}\label{u}
\eta_{i,i}=1.
\end{equation}

In  virtue  of  the tie transparency,  there  are  different expressions  of  $\eta_{i,k}$  in  terms  of  the  $\sigma_j$  and  $\eta_j$.
An  example is  given  in   Figure  \ref{TL3-10},  where  the  three  diagrams are  all  equivalent  to  $\eta_{2,7}$.  The  proof  of  the  equivalence  is  based  on   equations (\ref{eta12})  and  (\ref{eta21}).

There  are  in  fact  $(k-i-1)2^{k-i-1}$ equivalent  expressions  of  $\eta_{i,k}$. Let   $s_i$  denote  either the  element  $\sigma_i$ or $\sigma_i^{-1}$,  and  let $\overline s_i=s_i^{-1}$.

Given  a pair $i,k$ such that  $k-i>1$,  the following   $2^{k-i-1}$  expressions of  $\eta_{i,k}$,  obtained for  all  possible choices  of  $s_l=\sigma_l$ or $s_l=\sigma_l^{-1}$:
 \[ \eta_{i,k} =s_i s_{i+1} s_{i+2} \cdots s_{k-2} \eta_{k-1}  \overline s_{k-2}  \cdots  \overline s_{i+1} \overline s_{i}   \]
are all  equivalent.  Moreover, for  every  $j$  such that  $i \le j < k-1$  there  are   similarly     $2^{k-i-1}$   equivalent  expressions:
$$
\eta_{i,k}= s_i s_{i+1} \cdots s_{j-1}  s_{k-1} s_{k-2} \cdots s_{j+1} \eta_j
 \overline s_{j+1} \cdots \overline s_{k-1}\overline s_{j-1} \dots \overline s_{i+1}\overline s_i.
$$

\begin{figure}[h]
\centerline{\includegraphics{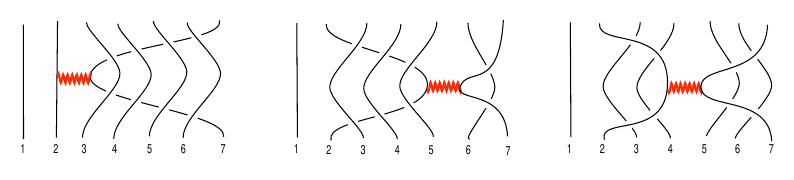}}
\caption{\small $\sigma_6^{-1} \sigma_5^{-1} \sigma_4^{-1} \sigma_3^{-1} \eta_2 \sigma_3 \sigma_4\sigma_5 \sigma_6 \sim
   \sigma_2\sigma_3 \sigma_4 \sigma_6  \eta_5 \sigma_6^{-1} \sigma_4^{-1}\sigma_3^{-1}\sigma_2^{-1} \sim \sigma_2^{-1} \sigma_3^{-1} \sigma_6 \sigma_5 \eta_4 \sigma_5^{-1} \sigma_6^{-1} \sigma_3 \sigma_2$.   }\label{TL3-10}
\end{figure}

Similarly, the  generalization  of (\ref{commeta})  to  all  $\eta_{i,k}$   reads (for  $i\le k\le m$) (see  examples  in  Figure  \ref{TL3-11}):
 \begin{equation}\label{simp}
 \eta_{i,k}\eta_{k,m}= \eta_{i,k}\eta_{i,m} = \eta_{k,m}\eta_{i,m}.
  \end{equation}
 In  particular,  if  $k=m$,  we  get
\[ \eta_{i,k}\eta_{k,k}= \eta_{i,k}\eta_{i,k}   \]
 i.e., by  (\ref{u}), all  ties  $\eta_{i,k}$  are  idempotent.  This  follows  as  well  from  any  expression  of  $\eta_{i,k}$,  in  virtue  of  (\ref{eta7}).

\begin{figure}[H]
\centering \includegraphics{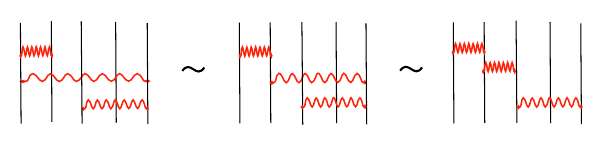}
\caption{Examples of relation (\ref{simp}):  $\eta_{1,2} \eta_{1,5} \eta_{3,5}= \eta_{1,2} \eta_{2,5} \eta_{3,5}=
\eta_{1,2} \eta_{2,3} \eta_{3,5} $.}    \label{TL3-11}\end{figure}

  Note  also  that  the  definition  of  $\eta_{i,k}$ for  every $k\ge i$  is  compatible  with the generalization to the  case  $k\le i$,  by  assuming,  for every  $k\ge i$
\begin{equation} \label{simmetry} \eta_{k,i}= \eta_{i,k}.
\end{equation}

  Before  concluding  this  section let  us see  the  generalization of equation  (\ref{comm}) to  generalized  ties of  any  length.

Let $s=\pm1$. Observe  the identities
\begin{equation}\label{commut1}
  \sigma_i^s   \eta_{i+1}=  \sigma_i^s  \eta_{i+1} ( \sigma_i^{-s} \sigma_i^{s})=  (\sigma_i^s  \eta_{i+1}   \sigma_i^{-s})  \sigma_i^{s} = \eta_{i,i+2} \sigma_i^s
\end{equation}
\begin{equation}\label{commut2}
  \eta_{i+1} \sigma_{i}^s = ( \sigma_{i}^{s} \sigma_{i}^{-s})   \eta_{i+1} \sigma_{i}^s=  \sigma_{i}^{s} (\sigma_{i}^{-s}    \eta_{i+1} \sigma_{i}^s)= \sigma_{i}^s \eta_{i,i+2}.
\end{equation}
They can  be  interpreted  as  the  sliding  of  the  tie  up  and  down along  the  braid  under  stretching or  contracting.
In  other  words,  while  the  element  $\sigma_i^{\pm 1}$     does  not  commute  with    $\eta_j$   when  $|j-i|=1$,  equations  (\ref{commut1}) and    (\ref{commut2}) provide   a  sort  of  commutation  rule between   $\sigma_i^{\pm 1}$    and  the spring.

Similarly,  a spring $\eta_{i,j}$ of  any  length bigger  than one, \lq commutes\rq\   (changing  its  length  by $\pm1$)   with   $\sigma_{i}$   and $\sigma_{i-1}$,  as  well as with  $\sigma_{j-1}$  and  $\sigma_j$,  according  to  the    equalities:
$$  \eta_{i,j}\sigma_{i}= \sigma_{i} \eta_{i+1,j},  \quad  \eta_{i,j} \sigma_{i-1} = \sigma_{i-1}  \eta_{i-1,j}, \quad \eta_{i,j}\sigma_{j}=\sigma_{j}\eta_{i,j+1}, \quad \eta_{i,j}\sigma_{j-1}\sigma_{j-1}= \eta_{i,j-1}.$$

 The  same  equalities  hold  for the  inverse of  the  generators $\sigma_i$'s.
  Therefore, in terms of diagrams, every   $\eta_{i,j}$  can be moved    to the bottom or to the top of  the  tied braid. More precisely, we have the following proposition.
\begin{proposition}[mobility  property]\label{down}
In any tied  braid all ties  can  be  moved to  the  bottom  (or to the top).   I.e.,  any  tied  braid    can  be put  in  the  form  $\beta \alpha$  (or $\alpha'\beta$),  where  $\beta$  is  a usual  braid,  and  $\alpha$ ($\alpha'$)  is a  set  of  generalized  ties.
\end{proposition}

  The generalized ties and its properties allow us to formulate the next propositions which play an important role in the next section.

 \begin{proposition}\label{equi} Any  set of  generalized ties in  $T\!B_n$   defines    an  equivalence relation   on  the    set of $n$  strands.
 \end{proposition}
\begin{proof}  The  properties  of  reflectivity,  symmetry  and  transitivity  follow  directly  from  equations (\ref{u}), (\ref{simmetry}) and  (\ref{simp}).
\end{proof}

\begin{proposition}\label{eqtb}  Let  $B_1$  and  $B_2$  be two  tied  braids  in  $T\!B_n$.  Let us
 write  $B_1=\beta_1 \alpha_1$  and  $B_2=\beta_2\alpha_2$   according  to  Proposition \ref{down}.  Then     $B_1=B_2$   if and only if   $\beta_1=\beta_2$  in  $B_n$  and $\alpha_1$  and  $\alpha_2$   define  the  same  partition  of  the  set   of  the   strands.
\end{proposition}

\subsection{The  Markov  and  Alexander  theorems  for  tied  links}

By taking the obvious monomorphism of monoids $T\!B_n$ into $T\!B_{n+1}$, we can   consider the inductive limit $T\!B_{\infty}$ associated to the inclusions chain $T\!B_2\subset \cdots \subset T\!B_{n-1}\subset T\!B_n\subset \cdots $. As in the classical case, given a tied braid $\tau$ we denote by $\widehat{\tau}$ its closure, which is  a tied link. We have then a map from $T\!B_{\infty}$ to $\mathcal{T}$. We are going to prove now that  in fact this map is surjective (Theorem \ref{alexandertb}). Later, we define the Markov moves for tied braids and then we prove a Markov theorem for tied links (Theorem \ref{markovtb}).

\begin{theorem}[Alexander theorem for tied links]\label{alexandertb}
Every oriented tied link can be obtained by closing  a tied braid.
\end{theorem}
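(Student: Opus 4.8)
\textit{Proof proposal.} The plan is to reduce the statement to the classical Alexander theorem together with the diagrammatic flexibility of the ties. Write a given oriented tied link as $L(P)$, where $L$ is the underlying oriented link and $P$ its set of ties. Recall that the braid group $B_n$ sits inside $T\!B_n$ as the submonoid generated by the $\sigma_i$; so it suffices to braid $L$ classically and then to account for the ties by multiplying by suitable tie generators.

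First I would forget the ties and apply the classical Alexander theorem to $L$: there are an $n$ and a braid $\beta\in B_n\subset T\!B_n$ with $\widehat{\beta}=L$. The ambient isotopy (a finite sequence of planar isotopies and Reidemeister moves) carrying $L$ to the closed braid $\widehat{\beta}$ carries the endpoints of each tie to some pair of points on $\widehat{\beta}$. As already observed in Section \ref{S1}, every Reidemeister move can be performed inside a ball from which the ties have been pushed away, so no tie obstructs the braiding and each tie is simply transported to a new position on the diagram $\widehat{\beta}$.

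Next, using tie-isotopy (Definition \ref{isotop}), I would slide the two endpoints of each tie along their respective components until they lie on two of the $n$ parallel strands in the bottom region of the closed braid, just below $\beta$. This is always possible because every component of $\widehat{\beta}$ passes through at least one of these bottom strands. A tie joining the bottom strand $a$ to the bottom strand $b$ (say $a<b$) is then exactly the generalized tie $\eta_{a,b}$ of $T\!B_n$; by the transparency of the cords (relation \eqref{eta5}) it is irrelevant whether its cord passes in front of or behind the intermediate strands, so this identification is unambiguous. A self-tie carried onto a single-strand component becomes the trivial tie $\eta_{a,a}=1$ (see \eqref{u}), consistent with such a tie being non-essential.

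Finally, collecting all the ties produces a tied braid
\[
\tau \;=\; \beta\,\eta_{a_1,b_1}\eta_{a_2,b_2}\cdots\eta_{a_\ell,b_\ell}\in T\!B_n,
\]
whose closure $\widehat{\tau}$ is, by construction, tie-isotopic to $L(P)$; since the factors $\eta_{a_t,b_t}$ pairwise commute by \eqref{eta1}, the order in which they are inserted is immaterial. I expect the only delicate point to be the verification in the previous paragraph that the sliding-and-insertion step really reproduces the tie-isotopy class, and not merely the underlying link: one must check, using the flexibility granted by Definition \ref{isotop} together with transparency, that relocating each tie onto the bottom strands alters neither which components are tied nor the essential/non-essential status of any tie. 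Everything else is a direct application of the classical Alexander theorem.
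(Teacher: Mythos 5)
Your proposal is correct and is essentially the paper's own argument: apply the classical Alexander procedure (which the ties do not obstruct, by transparency), then slide the tie endpoints to a standard location where each tie becomes a generalized tie $\eta_{a,b}$ of the braid. The only cosmetic difference is that the paper arranges the ties along a halfline from the braiding center $O$ rather than at the bottom of the braid, and leaves implicit the algebraic expression $\beta\,\eta_{a_1,b_1}\cdots\eta_{a_\ell,b_\ell}$ that you write out.
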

\begin{proof}
Given a tied link $L(P)$, one  fixes  a   center $O$ in   the  plane of the  diagram of $L(P)$  and  proceeds  according to the Alexander procedure for  classical  links.  The  ties   do  not prevent  the  procedure  because  of  their  transparency,  so  that  the  tied  link  that we  obtain  is  isotopy  equivalent  to  $L(P)$. However, such  a  tied  link has   ties  connecting  pairs of  points  in  any  direction.   Using  the  property  that  the  ends  of  the  ties  can  slide  freely  along  the  strands of  the  link and  that  the  ties  are  transparent,  we arrange  them  so that  the  ends of  each  tie  lie  on one  half line originating  at  $O$,  not coinciding  with the  halfline  where  we  open  $L(P)$ (see  Figure \ref{TL19}).  The obtained  braid will  have horizontal   ties  connecting  two  points  of  different  strands. This  is by  construction a tied  braid  whose  closure  is  isotopy  equivalent  to  $L(P)$.
\end{proof}

\begin{figure}[H]
\centering \includegraphics{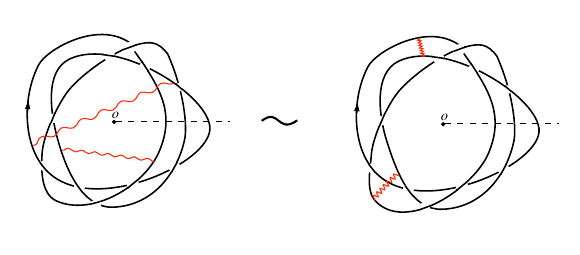}
\caption{Arranging  the  ties. }  \label{TL19}
\end{figure}

\smallbreak

 \noindent {\bf Notation.}
   Let  $\beta$  be a  tied  braid  in   $TB_{n}$. We  denote  $s_\beta$   the  permutation    associated, as usual, to the braid obtained  from $\beta$ by forgetting the ties in it.

\begin{definition}\label{simT}\rm

Two tied braids in  the monoid  $T\!B_{\infty}$ are $\sim_T$--equivalent if  one can  be  obtained  from  the  other    by a finite  sequence of     moves   belonging  to  the  following set of {\sl operations (or moves)}:
\begin{enumerate}
\item
$\alpha\beta$ \text{can be exchanged with} $\beta\alpha$
\item
$\alpha$ \text{can be exchanged with} $\alpha\sigma_n$ \text{or} $\alpha\sigma_n^{-1}$
\item $\alpha$ \text{can be exchanged with} $\alpha\eta_{i,j}$ if  $s_\alpha(i)=j$
\end{enumerate}
for all $\alpha, \beta\in T\!B_n$.

\end{definition}

\begin{theorem}[Markov Theorem for tied links]\label{markovtb}
 Two tied braids have    tie--isotopic   closures   if and only  if  they are $\sim_T$--equivalent.
\end{theorem}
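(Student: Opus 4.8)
The plan is to reduce the statement to the classical Markov theorem by means of the forgetful homomorphism $\pi\colon T\!B_n\to B_n$ sending each $\eta_i$ to $1$ and each $\sigma_i$ to itself, combined with a separate analysis of the tie data. This is a well-defined homomorphism of monoids, since every defining relation of $T\!B_n$ becomes a braid relation (or a triviality) once the $\eta$'s are killed. For a tied braid $\tau$, the underlying classical link of $\widehat{\tau}$ is $\widehat{\pi(\tau)}$, whose components correspond to the cycles of the underlying permutation; the ties of $\tau$ then induce an equivalence relation, hence a partition, on this set of components. By the remark that a tied link is nothing but a link whose components are colored so that two components share a color exactly when they are tied, the tie-isotopy class of $\widehat{\tau}$ is completely determined by the pair consisting of the ambient-isotopy class of $\widehat{\pi(\tau)}$ and this partition.

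For the \emph{if} direction I would simply check that each of the three moves preserves this pair. Move (1) is conjugation inside the monoid, so $\widehat{\alpha\beta}$ and $\widehat{\beta\alpha}$ have ambient isotopic underlying links and the ties are transported by the isotopy. Move (2) is an ordinary Markov stabilization on $\pi$, which merges the new strand into the component of strand $n$ without creating any essential tie. Move (3) does the same, except that the extra factor $\eta_n$ becomes a tie joining strands $n$ and $n+1$, which now lie on one and the same component; such a self-tie is inessential and may be removed by tie-isotopy, so the partition is again unchanged.

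For the \emph{only if} direction, suppose $\widehat{\tau_1}$ and $\widehat{\tau_2}$ are tie-isotopic. Forgetting ties, $\widehat{\pi(\tau_1)}$ and $\widehat{\pi(\tau_2)}$ are ambient isotopic, so by the classical Markov theorem $\pi(\tau_1)$ and $\pi(\tau_2)$ are connected by a finite sequence of classical conjugations and $\pm$-stabilizations. I would then lift this sequence step by step to $T\!B_\infty$: each classical conjugation lifts to an application of move (1), and each classical stabilization lifts to move (2), with the ties transported unchanged at each step. This produces tied braids $\tau_1'\sim_T\tau_1$ and $\tau_2'\sim_T\tau_2$ with $\pi(\tau_1')=\pi(\tau_2')$, whose closures have the same underlying link but possibly different tie decorations; since the original closures were tie-isotopic, the partitions they induce on the components of this common underlying link coincide.

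It then remains to prove the key lemma, where I expect the main difficulty to lie: two tied braids with the same underlying braid and inducing the same partition on the components of their common closure are $\sim_T$-equivalent. The proof should normalize the tie data using the structural relations of Section \ref{Monoid}: transparency (relation (\ref{eta5})) and equation (\ref{comm}) allow one to slide a tie endpoint along the braid; the generalized ties $\eta_{i,k}$ together with (\ref{simp}) and (\ref{commeta}) allow one to merge, split and redirect ties; idempotency of the $\eta_{i,k}$ allows one to delete redundant ties; and conjugation (move (1)) allows one to carry a tie endpoint around the closure so that it may be re-attached anywhere on its component. Move (3) is needed precisely here, to create or absorb an inessential tie on a stabilizing strand and thereby realize a change of which strands carry the ties without altering either the underlying link or the partition. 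Once one shows that any tie pattern can be brought, modulo $\sim_T$, to a standard pattern depending only on the partition, the lemma, and hence the theorem, follows.
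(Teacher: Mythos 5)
Your overall strategy --- forget the ties via the homomorphism $\pi$, record them only through the partition they induce on the components of the closure (the paper's own remark that a tied link is just a link with colored components), and then combine the classical Markov theorem with a normalization of tie patterns --- is in essence the same reduction the paper makes. The paper's proof is very short: for braids without ties the statement is the classical Markov theorem via moves (1) and (2); in the presence of ties it asserts that transparency, idempotency and relation (\ref{simp}) ``reflect exactly'' the tie-isotopy properties of Definition \ref{isotop}, with move (3) accounting for stabilization when an inessential tie joins a strand to the added loop. Your ``if'' direction is correct and is carried out at the same level of detail as the paper; in particular your observation that the factor $\eta_n$ in move (3) becomes a self-tie on a single component, hence removable, is exactly the paper's remark.

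The genuine gap is in your ``only if'' direction, and you partly name it yourself: the key lemma --- two tied braids with the same underlying braid inducing the same partition on the components of their closure are $\sim_T$-equivalent --- is stated together with a list of tools (transparency, the generalized ties $\eta_{i,k}$, relations (\ref{simp}), (\ref{commeta}), (\ref{comm}), idempotency, conjugation), but no argument is given; ``once one shows that any tie pattern can be brought, modulo $\sim_T$, to a standard pattern'' is precisely the assertion that needs a proof, e.g.\ an explicit normal form for products of generalized ties and a verification that carrying a tie endpoint around the closure realizes every repositioning permitted by tie-isotopy. There is a second, unacknowledged gap in the lifting step: the classical Markov theorem yields a sequence that may contain destabilizations, and a destabilization does not lift ``with the ties transported unchanged,'' since the tied braid lying over $\alpha\sigma_n^{\pm 1}$ may have ties attached to the last strand; removing them is again the content of the key lemma. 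To obtain, as you claim, $\tau_1'\sim_T\tau_1$ and $\tau_2'\sim_T\tau_2$ with $\pi(\tau_1')=\pi(\tau_2')$ using only conjugations and stabilizations from both ends, you would need the common-stabilization refinement of Markov's theorem, which requires separate justification. In fairness, the paper's proof leaves the same normalization unproved --- it is hidden in the words ``reflect exactly'' --- so your proposal is a more explicit account of what remains to be done, but as it stands it does not close that hole.
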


 \begin{proof}   Let  $L_1$  and $L_2$  be  two  tie--isotopic tied  links,  and  $B_1$ and  $B_2$ the corresponding  tied  braids whose  closures  give  respectively  $L_1$  and  $L_2$, according  to  Theorem 2. We  have  to  prove  that  $B_1$  and  $B_2$  are  $\sim_T$--equivalent.    Firstly,  as in the  case  of  the  Alexander  theorem,
we can  proceed   for  tied  links  as in the  proof of  the  Markov  theorem for  classical  links,    using the     elasticity property,  transparency property,  and  the  fact  that
 the  ends of  the ties  can  freely  slide along  the  strands.   The  moves  (1)  and  (2)  of Definition \ref{simT}  coincide in  fact  with  the  classical  Markov  moves; further,     observe  that  the  ties  do  not  prevent these  moves.     By  consequence, the  braids   $\beta_1 $ and   $ \beta_2$, obtained  from  $B_1$  and  $B_2$ by forgetting  the  ties,  are Markov--equivalent.  Therefore,  we  can use  the  Markov  moves (1)  and  (2) to  transform   $\beta_1$  into  $\beta_2$.  In  this  way,     $B_1$  and  $B_2$,  after  these operations,  consist in the  same braid $\beta=\beta_2$  with $n$ strands,  to  which  ties  are  added  somewhere.  Since $L_1$ and  $L_2$  are  tie--isotopic,  in  particular they   have  the  same  set  $C$ of  components.
 Moreover,   the  ties of $L_1$ and  the ties  of  $L_2$ define  the  same partition of  $C$,  according  to    Definition \ref{isotop}.    Therefore, the  ties of  $B_1$  and  $B_2$,   under braids closure,  define  the same partition of $C$. However,  in  general,   the  ties in   $B_1$  do  not  coincide  with  those in  $B_2$. Even  worse, by  writing  $B_1=\beta \alpha_1$  and $B_2=\beta \alpha_2$ (see  Proposition \ref{down}),  in  general  $\alpha_1$  and  $\alpha_2$  do  not define  the  same partition  of  the  set of  $n$  strands, i.e.,  by Proposition \ref{eqtb}, $B_1\not=B_2$.  We  shall prove  that,  by  using  move (3),  we  can  put   $B_1$   and  $B_2$     in  an  equivalent  form.  Namely,    by using  repeatedly move (3),  we  add  at  the  bottom  of both  $B_1$  and $B_2$     the element  $\gamma$,  formed  by all the  elements  $\eta_{i,j}$,  for  every  $i=1,\dots,n$  such  that  $s_\beta(i)=j\not=i$.  I.e., we obtain  $B_k\sim_T  \beta \alpha_k \gamma$, for $k=1,2$   and  we  must  prove  that   $\beta\alpha_1 \gamma= \beta \alpha_2 \gamma$.  To  do  this,  it is  sufficient,  by Proposition \ref{eqtb},   to   prove  that  $\alpha_1 \gamma$  and  $\alpha_2 \gamma$  define  the  same  partition  of  the set  of  strands.
 Now,  observe that the cycles of  the  permutation  defined  by $\beta$  correspond to  the  components of  the original links.  Since $s_\beta(i)=j$ if and  only if  $i$  and  $j$ belong  to  the  same  cycle, hence   $\gamma$ contains $\eta_{i,j}$ if  $i$  and  $j$  belong to the  same  component of  the links.  I.e.,  every  subset in the strands  partition  defined by  $\gamma$  contains  strands  belonging to  a sole  links component.
 Remember  that   $\alpha_1$ and $\alpha_2$ contains  ties  that  define,  under  braid  closure,  the same partition  of  the  links components.  Therefore,  both $\alpha_1$  and  $\alpha_2$ do not  contain  necessarily  ties  connecting  the  same  component,  i.e., pairs of   strands contained  in  a  same  subset;  conversely, if  $\alpha_1$  contains a  tie  connecting  two  strands belonging to  two  different  subsets,  then  $\alpha_2$ must  contain a tie connecting two strands belonging to  the  same two subsets.  Therefore, every subset of the  partition of  the set of strands defined by both  $\alpha_1 \gamma$  and  $\alpha_2 \gamma$  is    either  a  subset, or the  union     the  same  subsets defined by  $\gamma$.  Hence  $\alpha_1\gamma=\alpha_2\gamma$.

\end{proof}

\section{Construction of $\mathcal{F}$ via the Jones recipe}

The procedure   to construct the Homflypt polynomial done in \cite{joAM}, leads to a generic way to construct an invariant of knotted objects, which is called  Jones recipe. The main objective  of this section is to  use the Jones recipe to construct the invariant $\mathcal{F}$, see Theorem \ref{Ftrace}. To do that,
 we   firstly note that  Theorems \ref{markovtb} and \ref{alexandertb}  allow to see  the set of  tied links as the set of  equivalence classes, under $\sim_T$, of $T\!B_{\infty}$. Secondly, in Proposition \ref{representation} below,  we define a representation of the  $T\!B_{\infty}$ in the  so--called  bt--algebra. This representation together with a  Markov trace,   supported by the bt--algebra, are the main ingredients in the Jones recipe for  the  construction of the invariant $\mathcal{F}$.

\smallbreak
{  \subsection{The bt--algebra  and  the  monoid  of  tied  braids }
In order to show  the first ingredient we shall recall the definition of the bt--algebra \cite{ju, AJ, rh, baART, AJtrace}.
Let $n$ be a positive integer. The bt--algebra, denoted $\mathcal{E}_n=\mathcal{E}_n(u)$,
is defined as follows.

\begin{definition}\label{btalgebra}\rm
  The algebra ${\mathcal E}_n$ is the associative unital ${\Bbb C}(u)$--algebra generated by $ T_1, \ldots, T_{n-1}$, $E_1, \ldots , E_{n-1}$ subject to  the following relations:
\begin{eqnarray}
\label{E1}
T_iT_j& = & T_jT_i  \qquad \text{ for all $\vert i-j\vert >1$} \\
\label{E2}
T_iT_jT_i& = & T_jT_i T_j \qquad \text{ for all $\vert i-j\vert = 1$} \\
\label{E3}
T_i^2 & = & 1 + (u-1) E_{i}  \left( 1+  T_i\right)\qquad \text{ for all $i$} \\
\label{E4}
E_iE_j & =  &  E_j E_i \qquad \text{ for all $i,j$}\\
\label{E5}
E_i^2 & = &  E_i \qquad \text{ for all $i$}  \\
\label{E6}
E_iT_i  & = &   T_i E_i \qquad \text{ for all $i$}\\
\label{E9}
E_iT_j  & = &   T_j E_i \qquad  \text{ for all    $\vert i - j\vert > 1$}\\
\label{E7}
E_iE_jT_i & = &  T_iE_iE_j \quad = \quad E_jT_iE_j\qquad \text{ for $\vert i  -  j\vert =1$}\\
\label{E8}
E_iT_jT_i & = &   T_jT_i E_j \qquad \text{ for $\vert i - j\vert =1$}.
\end{eqnarray}
\end{definition}

The relations (\ref{E3}) and (\ref{E5}) imply that $T_i$ is  invertible. Moreover,  we have
 \begin{equation}\label{inverse}
T_i^{-1} = T_i + (u^{-1}-1)E_i + (u^{-1}-1)E_i T_i.
\end{equation}

\begin{proposition}\label{representation}
The mapping $\sigma_i\mapsto T_i$ and $\eta_i\mapsto E_i$ defines a monoid representation, denoted $\varpi_n$, of $T\!B_n$ in  $\mathcal{E}_n$.
\end{proposition}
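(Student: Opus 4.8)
The plan is to invoke the universal property of a monoid presentation. Since $T\!B_n$ is defined by the generators $\sigma_1,\dots,\sigma_{n-1},\eta_1,\dots,\eta_{n-1}$ subject to the relations of Definition \ref{monoidtb}, the assignment $\sigma_i\mapsto T_i$, $\eta_i\mapsto E_i$ extends to a unique monoid homomorphism $\varpi_n\colon T\!B_n\to\mathcal{E}_n$ as soon as the images $T_i,E_i$ satisfy, in $\mathcal{E}_n$, every defining relation of $T\!B_n$. Here I would first note that the $T_i$ are invertible in $\mathcal{E}_n$ by (\ref{inverse}), so that the relations involving $\sigma_i^{-1}$ make sense after applying the map. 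Thus the whole argument reduces to a relation-by-relation check.

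First I would dispatch the relations that transfer verbatim. The braid relations among the $\sigma_i$ correspond to (\ref{E1}) and (\ref{E2}); relation (\ref{eta1}) is (\ref{E4}); relation (\ref{eta2}) is (\ref{E6}); relation (\ref{eta3}) is (\ref{E9}); relation (\ref{eta7}) is (\ref{E5}); relation (\ref{eta4}) is exactly (\ref{E8}); and the chain (\ref{eta6}), namely $\eta_i\eta_j\sigma_i=\eta_j\sigma_i\eta_j=\sigma_i\eta_i\eta_j$, is precisely (\ref{E7}) read as $E_iE_jT_i=E_jT_iE_j=T_iE_iE_j$. All of these hold immediately under $\varpi_n$.

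The one defining relation with no direct counterpart in Definition \ref{btalgebra}, and hence the main obstacle, is the transparency relation (\ref{eta5}): for $|i-j|=1$ I must verify $E_iT_jT_i^{-1}=T_jT_i^{-1}E_j$ in $\mathcal{E}_n$. The plan here is to expand $T_i^{-1}$ via (\ref{inverse}) on both sides and reduce each resulting monomial using the relations already available. On the left one has $E_iT_jT_i=T_jT_iE_j$ by (\ref{E8}); $E_iT_jE_i=T_jE_iE_j$ by (\ref{E7}) applied with the roles of $i$ and $j$ interchanged, together with $E_iE_j=E_jE_i$; and $E_iT_jE_iT_i=T_jE_iE_jT_i=T_jT_iE_iE_j$, again by (\ref{E7}). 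On the right the only nontrivial term is $T_jE_iT_iE_j$, which equals $T_jT_iE_iE_j$ by (\ref{E6}). Collecting the three contributions $T_jT_iE_j$, $(u^{-1}-1)T_jE_iE_j$ and $(u^{-1}-1)T_jT_iE_iE_j$, one sees that the two sides coincide, which establishes (\ref{eta5}).

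Once this last verification is complete, every defining relation of $T\!B_n$ is satisfied among the images $T_i,E_i$, so $\varpi_n$ is a well-defined monoid homomorphism, i.e. the asserted representation. I expect the matching of relations to be routine; the genuine content, and the step I would write out in full, is the derivation of (\ref{eta5}) from the inverse formula (\ref{inverse}) and relations (\ref{E6}), (\ref{E7}), (\ref{E8}).
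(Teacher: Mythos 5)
Your proposal is correct and follows essentially the same route as the paper: all defining relations of $T\!B_n$ except (\ref{eta5}) are verbatim matches with the relations of $\mathcal{E}_n$, and the transparency relation (\ref{eta5}) is verified by expanding $T_i^{-1}$ via (\ref{inverse}) and reducing with (\ref{E6}), (\ref{E7}), (\ref{E8}). Your write-up is in fact more careful than the paper's, which compresses the term-by-term check into ``having in mind relations (\ref{E8}) and (\ref{E7})'' (and whose displayed conclusion even carries a typo, $E_i$ instead of $E_j$ on the right-hand side), whereas you spell out each monomial identity and also note explicitly that invertibility of the $T_i$ is needed for the map to make sense on relations involving $\sigma_i^{-1}$.
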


 \begin{proof}
 It is enough to prove that the defining relations of $T\!B_n$, suitably translated by replacing  $\sigma_i$ by $T_i$ and $\eta_i$ by $E_i$,  are satisfied in $\mathcal{E}_n$. Thus, it remains  only to check that the translation of relation (\ref{eta5}) holds in $\mathcal{E}_n$. From (\ref{inverse}), we have
$$
 E_i T_jT_i^{-1}  =  E_iT_jT_i + (u^{-1}-1)E_iT_jE_i + (u^{-1}-1)E_iT_jE_i T_i
$$
Now, having in mind relations (\ref{E8}) and  (\ref{E7}),  it follows that
\begin{equation}\label{trasparency}
 E_i T_jT_i^{-1} =   T_jT_i^{-1}E_i \qquad \text{for} \qquad \vert i-j\vert =1.
 \end{equation}
\end{proof}

\begin{remark}\rm
The proposition above says in particular that the  bt--algebra can be defined as the quotient of the monoid algebra of $T\!B_n$ by the two--sided ideal generated by the  elements $T_i^2 -1 -(u-1)E_i(1+T_i)$, for all $i$.
\end{remark}

\smallbreak
We shall recall now the second ingredient, that is, a Markov trace on the bt--algebra. Let  $\mathsf{A}$ and $\mathsf{B}$ be two   indeterminates. We have:

\begin{theorem}\cite[Theorem 3]{AJtrace}\label{trace}
There exists a family $\{\rho_n\}_{n\in\Bbb{N}}$ of Markov traces on the bt--algebra. I.e., for all $n\in {\Bbb N}$,   $\rho_n: {\mathcal E}_n\longrightarrow {\Bbb C}(u, \mathsf{A}, \mathsf{B})$ is the linear map  uniquely  defined by the following rules:
\begin{enumerate}
\item[(i)] $\rho_n(1) = 1$
\item[(ii)] $\rho_n(XY) = \rho_n(YX)$
\item[(iii)] $\rho_{n+1}(XT_n) =\rho_{n+1}(XE_nT_n)=\mathsf{A} \rho_n(X)$
\item[(iv)] $\rho_{n+1}(XE_n) =\mathsf{B} \rho_n(X)$
\end{enumerate}
where  $X, Y\in {\mathcal E}_n$.
\end{theorem}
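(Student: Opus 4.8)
The plan is to prove both existence and uniqueness by induction on $n$, the engine being an inductive description of $\E_{n+1}$ relative to $\E_n$. The first step is to record the structural fact that $\E_{n+1}$ is spanned over $\E_n$ by the four ``last-letter'' blocks $1, T_n, E_n, E_nT_n$: using the braid relations, the commutations (\ref{E9}), the quadratic relation (\ref{E3}) to collapse any repeated $T_n$, the idempotency (\ref{E5}), and the entangling relations (\ref{E7})--(\ref{E8}) to move a second $T_n$ or $E_n$ past an $\E_n$-block, any word in the generators of $\E_{n+1}$ can be rewritten as $\sum_k X_k\, g_k\, Y_k$ with $X_k, Y_k\in\E_n$ and $g_k\in\{1,T_n,E_n,E_nT_n\}$. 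I would cite (or reprove from the known linear basis of the bt--algebra) the sharper statement that a suitable choice of such words forms an $\E_n$-basis, since this is exactly what makes the relative trace below well defined.

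Next I would build the trace as a conditional expectation composed with the previous trace. Define the $\E_n$-bilinear map $\mathrm{tr}_{n+1}\colon \E_{n+1}\to\E_n$ by $\mathrm{tr}_{n+1}(X g Y)=X\,\tau(g)\,Y$ for $X,Y\in\E_n$, where $\tau(1)=1$, $\tau(T_n)=\A$, $\tau(E_n)=\B$, $\tau(E_nT_n)=\A$, and then set $\rho_{n+1}:=\rho_n\circ\mathrm{tr}_{n+1}$, with base case $\rho_1(1)=1$. Rules (i), (iii), (iv) are then immediate from the construction, and the coherence relation $\rho_{n+1}|_{\E_n}=\rho_n$ holds because $\mathrm{tr}_{n+1}$ restricts to the identity on $\E_n$. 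The first genuine verification is well-definedness of $\mathrm{tr}_{n+1}$, i.e. independence of the reduction to the above normal form: it amounts to checking that each defining relation of $\E_{n+1}$ carrying index $n$ or $n-1$ survives application of $\tau$, which is precisely where (\ref{E3}), (\ref{E7}), (\ref{E8}) and the inverse formula (\ref{inverse}) are used.

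The main obstacle is verifying the trace property (ii), $\rho_{n+1}(PQ)=\rho_{n+1}(QP)$. By bilinearity it suffices to check $\rho_{n+1}(gP)=\rho_{n+1}(Pg)$ for $g$ a generator and $P$ a normal-form basis element, reducing it to the trace property of $\rho_n$ by induction. When $g=T_i$ or $g=E_i$ with $i\le n-2$, the generator commutes with the last letter of the $\E_n$-block and the identity follows directly from the inductive hypothesis. The delicate cases are $i=n$ and $i=n-1$: here one pushes $T_n$ or $E_n$ through $P$ via (\ref{E7})--(\ref{E8}), collapses squares using (\ref{E3}) and (\ref{E5}), and then matches the two sides through rules (iii) and (iv). The equality $\rho_{n+1}(XE_nT_n)=\rho_{n+1}(XT_n)=\A\,\rho_n(X)$ encoded in rule (iii) is exactly what forces the $E_nT_n$-terms to balance. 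I expect these generator-against-generator computations --- in particular tracking how a stray $E_{n-1}$ or $T_{n-1}$ interacts with $T_n$ and $E_n$ --- to constitute the bulk of the proof.

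Finally, uniqueness follows from the same normal form. Any element of $\E_{n+1}$ is a combination of blocks $XgY$ with $g\in\{1,T_n,E_n,E_nT_n\}$; applying the trace property (ii) cyclically gives $\rho_{n+1}(XgY)=\rho_{n+1}(YX\,g)$, and then rules (iii), (iv) (or $\rho_{n+1}|_{\E_n}=\rho_n$ in the case $g=1$) express the value in terms of $\rho_n$. Induction on $n$, anchored at $\rho_1(1)=1$, thus determines every value, so any family satisfying (i)--(iv) coincides with the one constructed above.
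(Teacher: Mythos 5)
First, a structural point: this theorem is not proved in the present paper at all --- it is imported verbatim from \cite{AJtrace} (Theorem 3 there), so there is no in-paper proof to compare against; the closest in-paper material is the Section 5 algorithm, whose ``$\rho$-reducible'' normal form $W_{m-1}g_mV_{m-1}$, $g_m\in\{T_m,E_m,E_mT_m\}$, is exactly your four-block decomposition. Your overall route --- induction on $n$, a normal form for $\E_{n+1}$ relative to $\E_n$, definition of the trace through rules (i)--(iv) on that normal form, verification of centrality generator by generator, uniqueness by cyclic reduction --- is indeed the strategy of the cited proof, and your observation that relation (\ref{E7}) is what forces $\tau(E_nT_n)=\tau(T_n)=\A$ is exactly the right structural remark.

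However, as a proof the proposal has genuine gaps at precisely the decisive points. (a) Well-definedness of your relative trace $\mathrm{tr}_{n+1}$ is the crux, and reducing it to ``each defining relation survives $\tau$'' understates the problem: the four blocks $\E_n$, $\E_nT_n\E_n$, $\E_nE_n\E_n$, $\E_nE_nT_n\E_n$ are neither linearly independent nor free as bimodules. For instance (\ref{E7}) gives $E_{n-1}T_nE_{n-1}=E_{n-1}E_nT_n$, so the $T_n$-block and the $E_nT_n$-block overlap, and the long ties $E_{i,n+1}$ admit many distinct expressions inside $\E_nE_n\E_n$ (cf.\ the $(k-i-1)2^{k-i-1}$ equivalent expressions of $\eta_{i,k}$ in Section 3). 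A map prescribed on expressions $XgY$ is therefore not obviously a map at all; one needs an actual linear basis in normal form together with $\E_n$-bilinearity of the map defined on it, and that verification is of the same order of difficulty as the centrality check --- it is the content of a large portion of \cite{AJtrace}, and its combinatorial size is hinted at by the 27-case table R1 and procedures R2--R11 of Section 5. You flag this step but neither prove it nor give a precise citation for the bimodule statement you need (Ryom-Hansen's basis $E_AT_w$ is not by itself in this relative form). (b) The verification of rule (ii) against generators of index $n-1$ and $n$, which you correctly call the bulk of the proof, is not carried out; this is where (\ref{E3}), (\ref{E7}), (\ref{E8}) and (\ref{inverse}) genuinely interact and where such constructions typically fail if the trace parameters are chosen incorrectly, so it cannot be waved through. (c) A smaller but real gap in uniqueness: for the block $g=1$ you invoke $\rho_{n+1}|_{\E_n}=\rho_n$, which holds for \emph{your} family by construction, but uniqueness requires it for an \emph{arbitrary} family satisfying (i)--(iv); the rules as listed only determine $\rho_{n+1}$ on words carrying an index-$n$ letter, so you owe an argument (using the relations of $\E_{n+1}$) that they also pin down $\rho_{n+1}$ on $\E_n\subset\E_{n+1}$.
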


In   \cite[Section 5]{AJtrace}, by using the Jones recipe,  we have defined an  invariant  polynomial $\bar{\Delta}$   for  classical  links.

$\bar{\Delta}$ is essentially the composition of the    natural  representation  of  the  braid  group   $B_n$  in $\E_n$ with the Markov trace above,  see \cite[Theorem 4]{AJtrace}. The invariant of tied links  that we will define now is nothing more than an extension of $\bar{\Delta}$ to tied links. To be precise,  we simply replace in the definition of $\bar{\Delta}$ the representation of the braid group in  ${\mathcal E}_n$ by the representation $\varpi_n$  of the tied braid monoid  in ${\mathcal E}_n$. Thus, we will denote also by $\bar{\Delta}$ this invariant of tied links.  More precisely, set
 \begin{equation}\label{ELLE}
 \mathsf{L} := \frac{\mathsf{A}+\mathsf{B}-u\mathsf{B}}{u\mathsf{A}}
 \end{equation}
and
\begin{equation}\label{D}
 \bar{D} = -\frac{1-\mathsf{L}u}{\sqrt{\mathsf{L}}(1-u)\B}
\end{equation}
so  that
\begin{equation}\label{LDA=1}
\sqrt{\mathsf{L}}\, \bar{D}\, \mathsf{A}= 1\quad  \text{or equivalently}\quad  \bar{D} =  \frac{1}{\A \sqrt{\L}}.
\end{equation}
 The invariant $\bar{\Delta}$ is thus defined as follows
\begin{equation}\label{norma}
\bar{\Delta}(\theta) := \bar{D}^{n-1} (\sqrt{\mathsf{L}})^{e(\theta )}(\rho_n\circ\varpi_n)(\theta)\qquad  (\theta\in T\!B_n )
\end{equation}
where  ${e(\theta )}$ denotes the exponent of  the  tied  braid $\theta$. That is,  if  $\theta = g_1^{e_1} g_2^{e_2} \dots g_m^{e_m}   \in  T\!B_n$, where  the $g_i$'s   are defining generators  of $T\!B_n$, then
\begin{equation}\label{eteta}
e(\theta) := \sum_{i=1}^N  \epsilon_i
\end{equation}
where  $\epsilon_i=e_i$ if  $g_i=T_k$  and  $\epsilon_i=0$ if  $g_i=E_k$.

\begin{theorem} \label{Ftrace}
 Let  $TL$  be a tied link   diagram obtained by  closing the tied braid $\theta \in T\!B_n$.  Let   $\A=z$,  $\B=t$.  Then
 $$  \bar\Delta(\theta)= \mathcal{F}(TL).  $$
\end{theorem}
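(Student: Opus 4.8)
The plan is to prove $\bar\Delta(\theta)=\mathcal{F}(TL)$ by showing that the map $\theta\mapsto\bar\Delta(\theta)$ descends to a well-defined invariant of tied links satisfying the three conditions I, II and III that characterize $\mathcal{F}$ in Theorem \ref{the1}; uniqueness then forces $\bar\Delta=\mathcal{F}$. The numerical fact that makes everything fit together is the identity $\L=w^2$: substituting $\A=z$, $\B=t$ in (\ref{ELLE}) and using (\ref{tofw}) gives $\A+\B-u\B=z+t(1-u)=z+z(uw^2-1)=uzw^2$, hence $\L=w^2$, and by (\ref{LDA=1}) $\bar D=1/(\A\sqrt{\L})=1/(wz)$. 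I will fix the branch $\sqrt{\L}=w$ throughout.

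First I would establish that $\bar\Delta$ is constant on $\sim_T$-classes, so that by Theorems \ref{alexandertb} and \ref{markovtb} it defines a function $\T\to A$. Invariance under move (1) is immediate from trace property (ii). For move (2), $\bar\Delta(\alpha\sigma_n^{\pm1})=\bar\Delta(\alpha)$ follows from rule (iii) together with $\sqrt{\L}\,\bar D\,\A=1$ in the positive case, and from (\ref{inverse}), rules (iii)--(iv) and the identity $u^{-1}\A+(u^{-1}-1)\B=\A\L$ in the negative case; this last identity is exactly the definition (\ref{ELLE}) of $\L$. Move (3) is the only genuinely new verification: since $E_nT_n=T_nE_n$ by (\ref{E6}) and $\rho_{n+1}(XE_nT_n)=\A\,\rho_n(X)$ by rule (iii), the computation for $\alpha\sigma_n\eta_n$ reduces verbatim to that of move (2), while $\alpha\sigma_n^{-1}\eta_n$ is handled by expanding $T_n^{-1}E_n$ via (\ref{inverse}), (\ref{E5}) and (\ref{E6}) and again invoking rules (iii)--(iv).

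Next I would check I, II and III for the descended function. For I, the unknot is the closure of $1\in T\!B_1$, giving $\bar\Delta(1)=\bar D^0(\sqrt{\L})^0\rho_1(1)=1$, and the circle tied to itself is the closure of $\sigma_1\eta_1$, for which $\bar\Delta=\bar D\sqrt{\L}\,\rho_2(E_1T_1)=\bar D\sqrt{\L}\,\A=1$. For II, writing $TL=\widehat\theta$ with $\theta\in T\!B_n$, the link $TL^o$ is the closure of the same $\theta$ viewed in $T\!B_{n+1}$; using that the trace family is compatible, i.e. $\rho_{n+1}$ restricts to $\rho_n$ on $\E_n$, one gets $\bar\Delta_{n+1}(\theta)=\bar D\,\bar\Delta_n(\theta)=(wz)^{-1}\mathcal{F}(TL)$. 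The crux is III: from (\ref{inverse}) one reads off the $\E_n$-identity $T_i-T_i^{-1}=(1-u^{-1})E_i+(1-u^{-1})E_iT_i$; under the local dictionary $TL_+\leftrightarrow T_i$, $TL_-\leftrightarrow T_i^{-1}$, $TL_\sim\leftrightarrow E_i$, $TL_{+,\sim}\leftrightarrow E_iT_i$, inserting this identity into the common braid word (multiplying on both sides by the images of the unchanged prefix and suffix), applying the linear trace $\rho_n$, and multiplying by the common factor $\bar D^{n-1}(\sqrt{\L})^{e_0}$ yields, after accounting for the exponent shifts $e(\theta_\pm)=e_0\pm1$, $e(\theta_\sim)=e_0$, $e(\theta_{+,\sim})=e_0+1$,
\[
\frac{1}{\sqrt{\L}}\bar\Delta(TL_+)-\sqrt{\L}\,\bar\Delta(TL_-)=(1-u^{-1})\bar\Delta(TL_\sim)+\frac{1-u^{-1}}{\sqrt{\L}}\bar\Delta(TL_{+,\sim}),
\]
which is precisely III once $\sqrt{\L}=w$. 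The uniqueness clause of Theorem \ref{the1} then gives $\bar\Delta=\mathcal{F}$.

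The main obstacle I anticipate is bookkeeping rather than conceptual: correctly matching the normalization factors $\bar D^{n-1}$ and $(\sqrt{\L})^{e(\theta)}$ against the exponent changes produced by each skein term (and by the added strand in II), and confirming that the new tie-stabilization move (3) is compatible with this normalization. Everything hinges on the two numerical identities $\sqrt{\L}=w$ and $u^{-1}\A+(u^{-1}-1)\B=\A\L$, and on the correctness of the local dictionary between the four skein diagrams of Figure \ref{ties3} and the generators $T_i,T_i^{-1},E_i,E_iT_i$ of $\E_n$.
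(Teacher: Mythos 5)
Your proposal is correct and follows essentially the same route as the paper: establish $\L=w^2$ under $\A=z$, $\B=t$, verify that $\bar\Delta$ satisfies rules I, II and III of Theorem \ref{the1} (the skein rule III coming from formula (\ref{inverse}) and the linearity of the trace, with the same exponent bookkeeping $e(\theta_\pm)=d\pm 1$), and conclude by uniqueness. The only notable difference is that you verify invariance under the $\sim_T$-moves explicitly --- including the tie-stabilization move (3), via the identity $u^{-1}\A+(u^{-1}-1)\B=\A\L$ --- whereas the paper delegates the invariance of $\bar\Delta$ to \cite{AJtrace}; your explicit check is a sound and slightly more complete treatment of a step the paper leaves implicit.
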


\begin{proof}  We  use  here  the  results  of Section \ref{Sec2},  under  the  equations  $\A=z$  and  $\B=t$.   So, in  particular, $\L=w^2$.

If $TL$ is a  collection  of  $c$  unknotted,  unlinked  curves,  the  value  of  $\mathcal{F}(TL)$  is  $1/(\sqrt{\L}\A)^{c-1}$. Such  link  is indeed the  closure  of  the trivial  braid $\theta$ with  $c$  threads,  where  $\rho_c(\theta)=1$  and  $e(\theta)=0$.    Therefore $\bar\Delta(\theta)=\bar{D}^{c-1}=\mathcal{F}(TL)$. Observe  that  the  values of $\bar \Delta$ is  1  on  a  single  closed   unknotted curve.

 If $TL$ is a  collection  of  $c$  unknotted,  unlinked  curves with  $m$  essential  ties,   the  value  of  $\mathcal{F}(TL)$  is  $\B^m/(\sqrt{\L}\A)^{c-1}$. Such  link  is indeed the  closure  of  the   braid $\theta$ with  $c$  vertical threads,  of which  $m$  pairs  are  connected by  a  tie. Of  course  it  is  possible  to  arrange  the  ties  so  that  they have all  length  one  and  connect  the  last  $m+1\le c$  threads.  Then,  using item (iv) of the  definition  of  the  trace, we  obtain   $\rho_c(\theta)=\B^m$.  Moreover,     $e(\theta)=0$.    Therefore $\bar\Delta(\theta)=\bar{D}^{c-1} \B^m =\mathcal{F}(TL)$.

 Suppose  now  that  four  tied  braids in  $\E_n(u)$  are  given,  $\theta_+,  \theta_-, \theta_\sim$  and  $\theta_{+,\sim}$,  that  are  all  identical  except  for  the  neighborhood  of  a  $T_i$  element,  exactly  as  for  the  tied  links, see  Figure  \ref{ties3}.    Now,    formula (\ref{inverse}) of the inverse of $T_i$   and the linearity of the trace imply that:
\begin{equation} \label{preskein}
\rho_n (\theta_+) - \rho_n(\theta_-) =  (1-u^{-1}) \rho_n (\theta_\sim) + (1-u^{-1}) \rho_n (\theta_{+,\sim}).
\end{equation}
Let  now  $TL_+,  TL_-, TL_\sim$  and  $TL_{+,\sim}$ be the  tied links  obtained  from  the  closure  of  the  tied  braids  above.  The  polynomial  $\bar \Delta$
for  these  links  is  obtained  multiplying  the  trace  by  the  factor  $\bar D^{n-1}  (\sqrt{\mathsf{L}})^{e(\theta_k )}$  ($k=+,\  - ,\ \sim$  or  $+,\sim$).
Now,  let  us  denote   $e(\theta_\sim)=d $.  By  the  definition it is  evident  that  $e(\theta_+)=e(\theta_{+,\sim})= d +1$  and     $e(\theta_-)= d-1$.  Therefore  (\ref{preskein})  can  be  written  in  terms  of  the  polynomial  $\bar\Delta$:
\begin{equation}
\frac{1}{\sqrt{\L}} \bar \Delta(\theta_+) -  \sqrt{\L}  \bar \Delta (\theta_-) =  (1-u^{-1})    \bar \Delta (\theta_\sim) + (1-u^{-1}) \frac{1}{\sqrt{\L}} \bar \Delta (\theta_{+,\sim})
\end{equation}
which  coincides with  the  skein  rule  III  for  the  polynomial  $\mathcal{F}$.

Since  $\bar\Delta$ is  a  topological  invariant  for  links  (see  \cite{AJtrace})  and  satisfies  the  same  rules  I, II and III  as  the  polynomial  $\mathcal{F}$, invariant  for tied  links,  it  coincides  with  $\mathcal{F}$.

\end{proof}
\subsection{Examples}

We  calculate  here  the polynomial  $\bar \Delta$  for  some  tied  links  shown  in  Figure \ref{exam}.  We  will  see   that,   renaming  the  variables,   $\bar \Delta=\mathcal{F}$.

The Hopf link $H^+$  is  the  closure  of  the  braid  $T_1^2$. Therefore, using  (\ref{E3}),
$$\rho_2(T_1^2)=\rho_2(1+(u-1)E_1+(u-1)E_1T_1)=1+(u-1)\B +(u-1)\A $$
and,  using  (\ref{norma})  with  $n=2$  and  $e(T_1^2)=2$ we have
$$  \bar \Delta(T_1^2)= \frac {\sqrt{\L}}{A} (1+(u-1)\B +(u-1)\A). $$

Similarly, $\tilde H^-$  is  the  closure  of  the  braid  $E_1T_1^{-2}$. Therefore, using  (\ref{inverse})  and  (\ref{E3}), we  get
$$\rho_2(E_1T_1^{-2})=\frac{u^2\B-u\B+\B-u\A+\A}{u^2}.$$
Here  $n=2$  and  $e(E_1T_1^{-2})=-2$,  therefore
$$  \bar \Delta(E_1T_1^{-2})=\frac{u^2\B-u\B+\B-u\A+\A}{u(\A+\B-u\B)\sqrt{\L}}.$$

The  knot  $E$  is  the  closure  of the  braid  $T_1T_2^{-1}T_1T_2^{-1}$.  To  calculate  the  trace  we  use the  algorithm  shown  in  the  next  section.
$$\rho_3(T_1T_2^{-1}T_1T_2^{-1})=  \frac{ (u^3-4u^2+4u -1)\A\B+(3u-1-u^2)\A^2 + (u^3-2u^2+u)\B^2   }{u^2}$$
and,  using  (\ref{norma})  with  $n=3$  and  $e(T_1T_2^{-1}T_1T_2^{-1})=0$ we obtain
$$  \bar \Delta(T_1T_2^{-1}T_1T_2^{-1} )= \frac{ (u^3-4u^2+4u -1)\A\B+(3u-1-u^2)\A^2 + (u^3-2u^2+u)\B^2 }{u\A(\A+\B-u\B)}.   $$

\section{Computer  computation  of $\mathcal{F}$  }

In  this  section  we  show  how  to  calculate  the  polynomial $\mathcal{F}$  for  a  tied  link  or  a  classical link  by means of  the Theorem \ref{Ftrace}.  Indeed,  if  a   (tied) link $L$ is  put  in  the  form of  (tied) braid $X$,  then   we calculate   the  trace of  $X$,  and  then we  normalize  it by (\ref{norma}) to  obtain  the  invariant $\mathcal{F}$.

An  element  of the  algebra $\E_n$ is a linear  combination of  words, i.e.,  finite expressions  in  the generators $T_1, \dots, T_{n-1}$,  and  the  $E_1, \dots, E_{n-1}$.  The  coefficients
are Laurent polynomials in the parameter $u$.   An {\sl addend}  is  a single word with a  coefficient.

A word   is  {\sl  simple}  if  the  consecutive  generators   in  it  are  different  and  appear  to  the  first power.

The  trace  of  an  element  of  the  algebra     is obtained  as  a   linear  combination  of  traces  of  words.   A word  of  $\E_n$  containing  a  sole  element  in  the  set  $\{E_{n-1},T_{n-1},E_{n-1} T_{n-1}\}$  is  said  {\sl $\rho$--reducible}.  Indeed, it  is  reduced  by  the  trace  properties, stated  in  Theorem 3, to a  coefficient  times  the trace  of  a  word  of $\E_{n-1}$.  Therefore,  to  calculate  the  trace  of  a  word,  one  needs  to transform  every  word  into  a  word  or  a  linear  combination  of  words   $\rho$--reducible.

We list  here  a  series of procedures used  by  the  algorithm.

 \subsection{Simplification  of an addend} \label{proc1}   Iterations  of  the  following   procedures  reduce  a  word  into  a  linear  combination  of  simple  words.
\begin{enumerate}

 \item[\bf{S1}]    $k$ consecutive  copies  of  the  same  generator $E_i$
are replaced  by a unique  $E_i$  because of  the   relation
$$
 E_i^k =E_i  \quad \ \hbox{for every  $k>0$}.
$$

\item[\bf{S2}]       Consecutive powers  of the  same  generator $T_i$
 are  replaced by $T_i$   to the  algebraic sum of the  exponents of such  powers.

\item[\bf{S3}]
   An  addend  whose  word  contains  powers  of the $T_i$'s
  with exponents different  from one is  transformed  into a  sum of addends  containing  $T_i $'s to the first  power.
   We use the following relations that  follows  from  relations (\ref{E3}), (\ref{E5}),  (\ref{E6}) and  (\ref{inverse}).
  \begin{eqnarray*}  & T_i^{2m}  = 1+ \sum_{k=0}^{2m-1} (-1)^{k+1} u^k (E_i+E_iT_i)   \quad \ \hbox{for every  $m>0$}  \\
    & T_i^{2m+1}  = T_i+ \sum_{k=1}^{2m} (-1)^{k} u^k (E_i+E_iT_i)    \quad \ \hbox{for every  $m>0$} \\
    & T_i^{-2m}  = 1+ \sum_{k=0}^{2m } (-1)^{k } u^{-k} (E_i+E_iT_i)     \quad \ \hbox{for every  $m>0$}  \\
    & T_i^{-(2m+1)}  =  T_i+ \sum_{k=1}^{2m+1} (-1)^{k+1} u^{-k} (E_i+E_iT_i)   \quad \ \hbox{for every  $m>0$}. \end{eqnarray*}

\end{enumerate}

\subsection{Reduction  of  a  word  }\label{proc2}

The  following  procedures are  used to  make  a  word  $\rho$--reducible

\begin{enumerate}
\item[\bf{R1}]    Denote by  $X_i,Y_i,Z_i$     elements   in  the set
\begin{equation}\label{Gi} G_i:=\{T_i,  E_i,  E_iT_i =T_iE_i\}. \end{equation}
A word  of  type $X_i Y_{i-1} Z_i$  is  said
{\sl  reducible}.
   Using    the defining relations of the bt--algebra,  this   procedure   transforms  a  reducible word  $X_i Y_{i-1} Z_i$  into  a  word  or into a  linear  combination  of  words containing one  and  only one  element  of  $G_i$.   There  are
  in all  27    cases,  listed  here:

 \begin{eqnarray*}
E_i E_{i-1} E_i   & \rightarrow &  E_{i-1} E_i \\
 E_i T_{i-1} E_i   & \rightarrow & T_{i-1} E_{i-1} E_i \\
  E_i T_{i-1} E_{i-1}E_i   & \rightarrow & T_{i-1} E_{i-1} E_i \\
 E_i  E_{i-1}T_{i}   & \rightarrow & E_{i-1} T_{i} E_i \\
E_i  T_{i-1}T_{i}   & \rightarrow & T_{i-1} T_{i} E_i \\
E_i T_{i-1}E_{i-1}T_i   & \rightarrow &   T_{i-1}E_{i-1}T_i E_i \\
E_i E_{i-1} T_i E_i    & \rightarrow &   E_{i-1}T_i E_i \\
E_i T_{i-1}T_i E_i   & \rightarrow &   T_{i-1}E_{i-1}T_i E_i \\
E_iT_{i-1}E_{i-1}T_i E_i    & \rightarrow &   T_{i-1}E_{i-1}T_i E_i \\
T_i E_{i-1}E_i    & \rightarrow &  E_{i-1}T_i E_i \\
T_i T_{i-1}  E_i    & \rightarrow &  E_{i-1}T_i T_{i-1} \\
 T_i T_{i-1}E_{i-1}  E_i     & \rightarrow &   E_{i-1}T_i E_iT_{i-1} \\
  T_i E_{i-1}T_i  & \rightarrow & T_{i-1} E_i T_{i-1} \ + \ (1-u)\   E_i T_{i-1}E_{i-1} \ + \ (u-1) \ T_i  E_i E_{i-1} \\
  T_i T_{i-1}T_i   & \rightarrow &  T_{i-1}T_i T_{i-1} \\
  T_i T_{i-1}E_{i-1}T_i   & \rightarrow &   T_{i-1}T_i E_iT_{i-1} \\
  T_i E_{i-1}  T_i E_i   & \rightarrow &  u \ E_{i-1}  E_i \ +\  (u-1)\  E_{i-1}  T_i  E_i \\
  T_i T_{i-1}  T_i E_i   & \rightarrow &   T_{i-1}E_{i-1}T_i T_{i-1} \\
  T_i T_{i-1}E_{i-1}  T_i E_i    & \rightarrow &   T_{i-1}E_{i-1}T_i E_iT_{i-1} \\
  T_i  E_iE_{i-1}E_i   & \rightarrow &   E_{i-1}T_i E_i \\
  T_i E_iT_{i-1}E_i    & \rightarrow &  E_{i-1}T_i E_iT_{i-1}\\
  T_i E_iT_{i-1}E_{i-1}E_i    & \rightarrow &   E_{i-1}T_i E_iT_{i-1} \\
  T_i E_iE_{i-1}T_i   & \rightarrow &   u \  E_{i-1} E_i \  + \   (u-1)\  E_{i-1} T_i  E_i \\
  T_i E_iT_{i-1}T_i   & \rightarrow &   T_{i-1}T_i T_{i-1}E_{i-1} \\
  T_i E_i T_{i-1}E_{i-1}T_i    & \rightarrow &   T_{i-1}E_{i-1}T_i E_iT_{i-1} \\
   T_i E_iE_{i-1}T_i E_i   & \rightarrow &  u \  E_{i-1} E_i \  +  \ (u-1)\  E_{i-1} T_i  E_i \\
  T_i E_i T_{i-1}T_i E_i   & \rightarrow &  T_{i-1}E_{i-1}T_i E_iT_{i-1} \\
  T_i E_iT_{i-1}E_{i-1} T_i E_i   & \rightarrow &   T_{i-1}E_{i-1}T_i E_iT_{i-1}.
  \end{eqnarray*}


\end{enumerate}


Let  $m$  be  the  maximum  index  of  the  elements of the simple  word  $X$.      We  suppose, moreover, that  $X$  contains  $r>1$  elements  from   $G_m$ (defined by (\ref{Gi})),  so  that  it is  not  $\rho$--reducible.  The  iteration   of  the  following  procedures transforms  $X$  into  a word  or  a  linear  combination  of  words  with $r-1$  elements  from  $G_m$.

  Let  $X=x_1 x_2\dots x_t$,  where  each $x_j$  is  an  element  from  a $g_k$,  $k\le m$,  and  let  $g_m$ be      the  first  element  from  $G_m$  encountered  in  the  simple  word  $X$.   Observe  that  $X$  is  simple,  so  that   the  successive  element to  $g_m $   belongs  to  $G_k$, $k\le m-1$.  Write
  $$  X= X_1 g_m   Y_1. $$

   {\bf STEP 1}
 \begin{enumerate}

\item[{\bf  R1}]   Let  $y$  be  the  first  element  of  $Y_1$. If $y=g_i$,  with $i<m-1$,  and  let  $X_2=X_1  y$  and  $Y_1= y Y_2$, so  that
$$   X= X_2  g_m Y_2 . $$  Then  $X_1\leftarrow X_2$  and  $Y_1\leftarrow Y_2$  and  repeat, until  $y\in  G_{m-1}$  or  $y\in G_m$. If  $y\in  G_m$,  then  the  simplified  $X$   is  a    word  or  a  linear  combination  of  words  with at  most  $r-1$  elements  from  $G_m$. Otherwise

\item[{\bf  R2}] If  $y=g_{m-1}\in  G_{m-1}$,    write  $Y_1=g_{m-1}Y_2$,    so  that
$$  X= X_1  g_m g_{m-1} Y_2 . $$  Then   $Y_1\leftarrow Y_2$.  Go  to  step 2.
\end{enumerate}

{\bf  STEP 2}

\begin{enumerate}
\item[{\bf  R4}]  If the  first  element $y$  of  $Y_1$     belongs  to  $G_m$,  then  reduce  the  word by  R1,  so  that the  simplified  $X$   is  a    word  or  a  linear  combination  of  words  with at  most $r-1$  elements  from  $G_m$. Otherwise

\item[{\bf  R5}]  If $y=g_i$,  with $i<m-2$,    let  $X_2=X_1  y$  and  $Y_1=y Y_2$, so  that
$$   X= X_2  g_m g_{m-1} Y_2 . $$  Then  $X_1\leftarrow X_2$  and  $Y_1\leftarrow Y_2$.  Let  $y$   be the  first  element  of  $Y_1$. If $y=g_i$,  with $i<m-2$, then   repeat  R5. If $z\in  G_m$,  then   go  to  R4. If  $y\in G_{m-1}$,  then simplify.  The  simplified  words  are  of type  $X= X_1 g_m Y_1$ (in  that  case  go  to  R2)  or  of  type      $  X= X_1  g_m g_{m-1} Y_1 $ (in  that  case  go  to  R4).  Otherwise

\item[{\bf  R6}]  If $y=g_i$,  with $i=m-2$,    write  $Y_1=g_{m-2}Y_2$,    so  that
$$  X= X_1  g_m g_{m-1}g_{m-2} Y_2 . $$  Then   $Y_1\leftarrow Y_2$.   Go  to  next step.

\end{enumerate}
   {\bf  STEP  n}
   At  step  $n$  every  simple  word  $X$  with  $r>1$  elements  from  $G_m$  is  written  as
   $$  X=  X_1  Z_{m,n} Y_1 \quad  \text{where} \quad Z_{m,n}= g_m g_{m-1}g_{m-2} \dots g_{m-n+1}.$$
   \begin{enumerate}
\item[{\bf  R7}] Let  $k=m-n+1$.  If  the  first  element $y$ of  $Y_1$  is  $g_i$  with  $i<k-1$,  then   put it at  the  end  of $X_1$, since  it  commutes  with  every  $g_j$,   $k\le j\le m$, and     proceed by  analyzing  the  successive  element.
 \item[{\bf  R8}] If  $y=g_j$,  with  $j=k$,  then  simplify  the  word.  The  simplified  words  have  to  be  processed  by  step  $n$  or  $n-1$.
 \item[{\bf  R9}]  If $y\in G_j$, with  $k<j< m$,  then  $y$  commutes  with all  elements   of  $Z_{m,n}$  with index  less  than  $j-1$,  so  that,  writing $y=g'_j$,  we  get
     $$  X=  X_1  g_m g_{m-1} \dots  g_{i+1} g_{j} g_{j-1} g'_j  g_{j-2} \dots g_k. $$
 The  subword   $g_{j} g_{j-1} g'_j$    is  processed  by  R1,   replacing  it  by   subwords of  type    $ g'_{j-1} g''_j$  or $g'_{j-1} g''_j g''_{j-1}$.
Since  $g'_{j-1}$  commutes   with  $g_{i+1},g_{i+2},\dots, g_m$,  it  is  put  at  the  end  of $X_1$.    If  the  element  $g''_{j-1}$  is  absent,  then
the  subword  $g_{j-2} \dots g_k$  commutes  with  $g_m  g_{m-2} \dots g_j$  and  it is put  at  the  end  of $X_1$.  Go  to step  m-j+1. Otherwise,  go  to  R8.
\item[{\bf  R10}]   If $y\in G_m$,    then  $y$  commutes  with all  elements   of  $Z_{m,n}$  with index  less  than  $m-1$,  so  that,  writing $y=g'm$,  we  get
     $$  X=  X_1  g_m g_{m-1}   g'_{m} g_{m-2}   \dots  g_k. $$ The  word  is  reduced by  R1  and the  reduced words  have  at  most  $r-1$  elements  from  $G_m$.
\item[{\bf  R11}]    If $y=g_{k-1}$,   write  $Y_1=g_{k-1}Y_2$,    so  that
$$  X= X_1  g_m g_{m-1}g_{m-2}\dots \dots g_{m-n }  Y_2 . $$  Then  $Y_1\leftarrow Y_2$.   Go  to  step  n+1.
\end{enumerate}

Since  the  number  of  elements  between  $g_m$  and  the  second  occurrence in  $X$  of  an  element  from  $G_m$  is  finite,  the  case $y\in  G_m$   happens  for  some  step $n\ge 1$,  so  that  the  number  $r$ of  occurrences  of  elements  from  $G_m$  is  diminished.  When  $r=1$     the  word  is  $\rho$--reducible.

\subsection{Contraction }\label{CONTR}
Suppose  the  word  $X$  be  $\rho$--reducible.   Then  we  write  $X$  as   $W_{m-1 } g_m V_{m-1}$, with  $g_m\in  G_m$.

\begin{enumerate}

\item[{\bf C1}]   The  words   $W_{m-1 }$  and $V_{m-1}$  are  two  simple words of $\E_{m}$. Let   $\alpha$  be a coefficient.  This  procedure
transforms  the  following simple addends:
\[ \begin{array}{lll} \alpha \  W_{m-1}E_m V_{m-1} &     \rightarrow  &   \alpha \  \B \   W_{m-1}V_{m-1}  \\
\alpha \   W_{m-1}T_m V_{m-1} &     \rightarrow &   \alpha \ \A \   W_{m-1}V_{m-1}  \\

\alpha \  W_{m-1}E_m T_mV_{m-1}     & \rightarrow &  \alpha \ \A \  W_{m-1}V_{m-1}. \\
\end{array}\]

This procedure applies  the properties  of the trace.  Indeed, we  have  for instance  (see statements (i),(ii) and (iii) in Theorem  \ref{Ftrace}),
 \[ \rho_{m+1}( \alpha \  W_{m-1} T_m V_{m-1})= \text{(by (ii))} = \alpha \  \rho_{m+1}( V_{m-1} W_{m-1} T_m ) = \quad \quad  \text{(by (iii))} \]
 \[ = \alpha \ \A \ \rho_m ( V_{m-1} W_{m-1})= \text{(by (ii))} = \alpha \  \A \  \rho_m ( W_{m-1} V_{m-1}). \]
  When  $m=1$,  $W_{m-1}=V_{m-1}=1$, so  that $\rho_m ( W_{m-1} V_{m-1})=1$ (by   (i)).
    Therefore the  output of  the procedure
is a polynomial  and  increments the trace.

\end{enumerate}

\vskip  1 cm

  \noindent {\bf Remark}:
    Recently, the bt--algebra and  the  tied  links were used in the definition  of a new
invariant of classical links,  for  details  see  \cite[Section 8]{cjkl}.    Finally,  we  notice  that
I. Marin in \cite{marin} has associated to every Coxeter group  a certain   algebra    that, when
the Coxeter group is finite of type $A$,   coincides  with the  bt--algebra.

\section*{Acknowledgments}
  We  are  grateful to the Referees  for their   valuable
comments and suggestions.

\end{document}